\documentclass[authoryear,hyperref,noinfoline]{imsart}

\RequirePackage{graphicx}
\RequirePackage{color}

\RequirePackage[OT1]{fontenc}
\RequirePackage{amsthm,amsmath}
\RequirePackage{natbib}
\numberwithin{equation}{section}

\RequirePackage[colorlinks,citecolor=blue,urlcolor=blue]{hyperref}
\RequirePackage{hypernat}

%

%
\usepackage{amssymb}
\RequirePackage{url}
\RequirePackage{ifthen}
\usepackage{lineno}
\usepackage{subfigure}
\RequirePackage{multirow}
\RequirePackage{hangcaption}
%
%
\theoremstyle{plain}
\newtheorem{Thm}{Theorem}[section]   
\newtheorem{Prop}[Thm]{Proposition}
\newtheorem{Lem}[Thm]{Lemma}
\newtheorem{Cor}[Thm]{Corollary}

\theoremstyle{rem}
\newtheorem{Rem}[Thm]{Remark}
\newcommand{\rfia}[1]{\makebox[\parindent][l]{%
                     \makebox[0em][r]{\rm(}\sf#1\rm)}}
\newcounter{ABCcB}
\newcommand{\theABCcC}{\alph{ABCcB}}

\newenvironment{ABC}{\begin{list}{
  \rfia{\theABCcC}}{\usecounter{ABCcB} \topsep 0ex \partopsep 0ex \itemsep0ex
  \parsep=\parskip \leftmargin 0em \rightmargin 0em \itemindent=\parindent
  \listparindent=\parindent  \labelsep 0.5em \labelwidth 0.5em }}{\end{list}}

\newlength{\TabHeight}\setlength{\TabHeight}{11cm}

\newcommand{\hreft}[1]{{\href{#1}{\tt\url{#1}}}}
\newcommand{\Ew}{\mathop{\rm {{}E{}}}\nolimits} 
\newcommand{\Var}{\mathop{\rm Var}\nolimits}    
\newcommand{\Cov}{\mathop{\rm Cov}\nolimits}    
\newcommand{\ve}{\varepsilon}

\newcommand{\SSs}{\scriptscriptstyle}
\newcommand{\Ts}{\textstyle}
\newcommand{\Ds}{\displaystyle}
\newcommand{\ssr}{\rm\scriptscriptstyle}
\newcommand{\Lw}{{\cal L}}

\newcommand{\tr}{\mathop{\rm{} tr{}}}
\newcommand{\B}{\mathbb B}

\newcommand{\R}{\mathbb R}

\newcommand{\N}{\mathbb N}
\newcommand{\EM} {{\mathbb I}}

\newcommand{\iid}{\mathrel{\stackrel{\ssr i.i.d.}{\sim}}}

\newcommand{\Tfrac}[2]{\textstyle\frac{#1}{#2}}

\newcommand{\rk}{\mathop{\rm rk}\nolimits}
\newlength{\SyW}
\newcommand{\wto}{\mathrel{\mathsurround0em \mbox{$\longrightarrow$}%
  \llap{\settowidth{\SyW}{$\longrightarrow$}
  \raisebox{-.15ex}{\makebox[\SyW]{\scriptsize\rm w}}}}}

\newcommand{\Jc}{\mathop{\bf\rm{{}I{}}}\nolimits}
\newlength{\ei}\ei=0.0138888889em

\begin{document}
\begin{frontmatter}
\title{Optimally (Distributional-)Robust Kalman Filtering}
\runtitle{Optimally Robust Kalman Filtering}
\author{Peter Ruckdeschel}
\runauthor{Peter Ruckdeschel}
\ead[label=e1]{Peter.Ruckdeschel@itwm.fraunhofer.de}
\date{\today}
\affiliation{Fraunhofer ITWM and TU Kaiserslautern, Germany}

\address{Peter Ruckdeschel\\[1ex]
Fraunhofer ITWM, Abt.\ Finanzmathematik,\\ Fraunhofer-Platz 1, 67663 Kaiserslautern, Germany\\
and\\ TU Kaiserslautern, Fachbereich Mathematik, AG Statistik, \\
P.O.Box 3049, 67653 Kaiserslautern, Germany\\[1ex]
\printead{e1}
}
\begin{abstract}
We present optimality results for robust Kalman filtering where robustness
is understood in a distributional sense, i.e.; we enlarge the distribution assumptions
made in the ideal model by suitable neighborhoods. This allows for outliers which in our context
may be system-endogenous or -exogenous, which induces the somewhat conflicting
goals of tracking and attenuation.

The corresponding minimax MSE-problems are solved for both types of outliers
separately, resulting in closed-form saddle-points which consist of an
optimally-robust procedure and a corresponding least favorable outlier
situation.
The results are valid in a surprisingly general setup of state space models,
which is not limited to a Euclidean or time-discrete framework.

The solution however involves computation of conditional means in the ideal
model, which may pose computational problems.
In the particular situation that the ideal conditional mean is linear in
the observation innovation, we come up with a straight-forward Huberization, the
{\tt rLS} filter, which is very easy to compute.
For this linearity we obtain an again surprising characterization.

\end{abstract}
\begin{keyword}[class=AMS]
\kwd[Primary ]{93E11}
\kwd[; secondary ]{62F35}
\end{keyword}

\begin{keyword}
robustness\sep Kalman Filter\sep innovation outlier\sep additive outlier\sep minimax robustness;
\end{keyword}

\end{frontmatter}
%
%
\section{Introduction}
Robustness issues in Kalman filtering have long been a research topic, with first
(non-verified) hits on a quick search for ``robust Kalman filter'' on
\hreft{scholar.google.com} as early as 1962 and 1967, i.e.; the former even before the
seminal \citet{Hu:64} paper, often referred to as birthday of Robust Statistics.

In the meantime there is an ever growing amount of literature on this topic
---\citet{Ka:Po:85} have already compiled as many as 209 references to that subject in 1985.
Excellent surveys are given in, e.g. \citet{Ka:Po:85}, \citet{St:Du:87},
\citet{Sch:Mi:94}, \citet{Kue:01}. 

In these references you find many different notions of robustness, all somewhat related
to stability but measuring this stability w.r.t.\ deviations of various ``input parameters'';
in this paper we are concerned with (distributional) \textit{minimax robustness}; i.e.;
we work with suitable distributional neighborhoods about an ideal model,
already used by \citet{B:S:93} and \citet{B:P:94}, and then
solve the problem to find the procedure minimizing the maximal predictive
inaccuracy on these neighborhoods---measured in terms of mean squared error
(MSE)---in quite generality, compare Theorems~\ref{ThmSO}, \ref{ThmeSO}, \ref{ThmISO}.
In the particular situation that the ideal conditional mean is linear in
the observation innovation (for a definition see subsection~\ref{KalmDef}),
the minimax filter is a straight-forward Huberization, the
{\tt rLS} filter, which is extremely easy to compute.
For this linearity we obtain a surprising characterization in
Propositions~\ref{nonorm} and \ref{nolin}. This motivates a corresponding
optimal test for linearity, Proposition~\ref{testprop}. Even in situations
where no or only partial knowledge of the size of the contamination is
available we can distinguish an optimal procedure, compare Lemma~\ref{Lem223}.

\section{General setup}\label{GenSetup}
\subsection{Ideal model} \label{idmodel}

In this section, we start with some definitions and assumptions.
We are working in the context of state space models (SSM's) as to be found
in many textbooks, cf.\ e.g.\ \citet{A:M:79}, \citet{Ha:91}, and \citet{D:K:01}.
%
\subsubsection{Time Discrete, linear Euclidean Setup}
The most prominent setting in this context is the linear,
time--discrete, Euclidean setup, which will serve as reference setting
in this paper: An unobservable $p$-dimensional
state $X_t$ evolves according to a possibly time-inhomo\-geneous
vector autoregressive model of order $1$  (VAR(1)) with
innovations $v_t$ and transition matrices $F_t$, i.e.,
\begin{equation} \label{VAR1}
X_t=F_t  X_{t-1}+ v_t
\end{equation}
The statistician observes a $q$-dimensional linear transformation
$Y_t$ of $X_t$ and in this makes an additive observation error $\ve_t$,
\begin{equation}
Y_t =Z_t  X_t+ \ve_t \label{linobs}
\end{equation}
In the ideal model we work in a Gaussian context, that is we assume
\begin{align}
&  v_t \sim {\cal N}_p(0,Q_t), \qquad 
\ve_t \sim {\cal N}_q(0,V_t),  \qquad  
 X_0  \sim  {\cal N}_p(a_0,Q_0),  \label{normStart}\\
&{ X_0, v_s, \ve_t,  \;s,t\in\N\;\;\mbox{ stochastically independent}}
\end{align}
As usual,  normality assumptions  may be relaxed to working only
with specified first and second moments, if we restrict ourselves to
linear unbiased procedures as in the Gauss-Markov setting.\\
For this paper, we assume the hyper--parameters $F_t,Z_t,Q_t,V_t,a_0$ to be known.
%

\subsubsection{Generalizations covered by the present approach}
Parts of our results (more specifically, all of sections~\ref{1stpOpt}, \ref{wayoutsec}) also cover much more general SSMs;
in this paragraph we sketch some of these. To
begin with, as long as MSE makes sense for the range of the states,
these results cover general Hidden Markov Models
for arbitrary observation space
 as given by %
\begin{align}
 &P(X_0\in A) =\int_A p_0^{X_0}(x)\, \mu_0(dx) \label{genInit}\\
 &P(X_t \in A| X_{t-1}=x_{t-1}, \ldots, X_0=x_0)=
 \int_A p_t^{X_t|X_{t-1}=x_{t-1}}(x) \,\mu_t(dx), \label{MarkovState}\\
 &P(Y_t \in B| X_t=x_t)= \int_B q_t^{Y_t|X_t=x_t}(y)\, \nu_t(dy) \label{GenObs}
\end{align}
In this setting, we assume known (and existing) [regular conditional] densities
$p_0^{X_0}$, $p_t^{\,\cdot\,|\,\cdot\,}$, $q_t^{\,\cdot\,|\,\cdot\,}$ w.r.t.\ known
measures $\nu_t$, $\mu_t$ on $\B^q$ and $\B^p$, respectively.
Dynamic (generalized) linear
models as discussed in \citet{W:H:M:85} and \citet{We:Ha:89} are covered as well
---under corresponding assumptions as to (conditional) densities and range of the states.
In applications of Mathematical Finance we also need to cover continuous time
settings, i.e.; there is an unobservable state evolving according to
an SDE
\begin{equation} \label{SDE}
 dX_t=  f(t,X_t)\,dt + q(t,X_t)\,dW_t
 \end{equation}
 where for $X_0$ we assume \eqref{genInit}, while  $W_t$, is a Wiener process,
and $f$ and $q$ are suitably measurable, known functions,
and observations $Y_t$ are either formulated as a time-continuous observation process (as in \citet{Ta:98})
or---more often---at discrete, but not necessarily equally spaced times,
compare, e.g.\ \citet{N:M:M:00} and \citet{Si:02}.
In this context, but also for corresponding non-linear time-discrete
SSMs, a straightforward approach linearizes the corresponding transition
and observation functions to give the \emph{(continuous-discrete) Extended Kalman Filter (EKF)}
 After this linearization
we are again in the context of a (time-inhomogeneous) linear SSM, hence
the methodology we develop in the sequel applies to this setting as well.

So far we do not cover approaches to improve on this simple linearization,
notably the \emph{second order nonlinear filter (SNF)} introduced
in \citet{Ja:70}, also cf.\ \citet[sec.~4.3.1]{Si:02}.
 the \emph{unscented Kalman filter (UKF)} \citep{J:U:DW:00}
and Hermite expansions as in \citet{Ai:02}, see also \citet[sec.~4.3]{Si:02}.

Going one more step ahead, to cover applications such as
portfolio optimization, we may allow for controls $U_t$   to be set or determined by
the statistician, and which are fed back in the state equations.
In the context of the continuous time model,
this is also known as SDEX, cf.\ \citet{N:M:M:00}, and
for the application of stochastic control to portfolio optimization, cf.\ \citet{Ko:97}.
In this setting, controls $U_t$ are usually assumed measurable w.r.t.\  $\sigma(Y_{t-})$;
to integrate them into our setting, we simply have to integrate them in the
corresponding condition vectors.

Finally, the question of specifying the order of conditioning left aside, we
do not make use of the linearity of time, so our minimax results also cover
suitable formulations of indirectly observed random fields.
\subsection{Deviations from the ideal model} \label{devidmod}
As usual with Robust Statistics, the ideal model assumptions we have specified
so far are extended by allowing (small) deviations, most prominently generated by outliers.
In our notation, suffix ``${\Ts \rm id}$'' indicates the {\it id}eal setting, ``${\Ts \rm di}$''
the {\it di}storting (contaminating) situation, ``${\Ts \rm re}$'' the {\it re}alistic, contaminated situation.

\subsubsection{AO's and IO's}
In SSM context (and contrary to the independent setting), outliers may or may not propagate.
Following the terminology of \citet{F:72}, we distinguish {\em innovation outliers}
(or IO's) and {\em additive outliers} (or AO's).  Historically, AO's denote gross errors affecting the observation
errors, i.e.,
\begin{eqnarray}
{\rm AO} &::& \ve^{\SSs\rm re}_t \sim
(1-r_{\SSs\rm AO}){\cal L}(\ve^{\SSs\rm id}_t)+
 r_{\SSs\rm AO} {\cal L}(\ve^{\SSs\rm di}_t)   \label{AOdef}
\end{eqnarray}
where ${\cal L}(\ve^{\SSs\rm di}_t)$ is arbitrary, unknown and uncontrollable (a.u.u.) and $0\leq r_{\SSs\rm AO}\leq 1$
is the AO-contamination radius, i.e.; the probability for an AO.
IO's on the other hand are usually defined as outliers which affect the innovations,
\begin{eqnarray}
{\rm IO} &::& v^{\SSs\rm re}_t  \sim
(1-r_{\SSs\rm IO}){\cal L}(v^{\SSs\rm id}_t)+
r_{\SSs\rm IO} {\cal L}(v^{\SSs\rm di}_t) \label{IOdef}
\end{eqnarray}
where again ${\cal L}(v^{\SSs\rm di}_t)$ is a.u.u. and $0\leq r_{\SSs\rm IO}\leq 1$
is the corresponding radius.

We stick to this distinction for consistency with literature, although we rather
use these terms in a wider sense, unless explicitly otherwise stated:
\textit{IO}'s denote endogenous outliers affecting the state equation
in general, hence distortion propagates into subsequent states. This also covers level shifts or linear trends;
which if $|F_t|<1$ are not included in \eqref{IOdef}, as IO's would then decay geometrically in $t$.
We also extend the meaning of \textit{AO}'s to denote general exogenous outliers which
 enter the observation equation only and thus do not propagate, like substitutive outliers or \textit{SO}'s defined as
\begin{eqnarray}
{\rm SO} &::& Y^{\SSs\rm re}_t  \sim (1-r_{\SSs\rm SO}){\cal
L}(Y^{\SSs\rm id}_t)+r_{\SSs\rm SO}{\cal
L}(Y^{\SSs\rm di}_t) \label{SOdef}
\end{eqnarray}
where again ${\cal L}(Y^{\SSs\rm di}_t)$ is a.u.u.\   and $0\leq r_{\SSs\rm SO}\leq 1$
is the corresponding  radius.

Apparently, the SO-ball of radius $r$ consisting of all ${\cal L}(Y^{\SSs\rm re}_t)$
according to \eqref{SOdef} contains the corresponding AO-ball of the same radius
when $Y^{\SSs\rm re}_t=Z_tX_t+ \ve^{\SSs\rm re}_t$. However, for technical reasons,
we make the additional assumption that
\begin{equation}\label{indep}
Y^{\SSs\rm id}_t, Y^{\SSs\rm di}_t\quad \mbox{stochastically independent}
\end{equation}
and then this relation no longer holds.

\subsubsection{Different and competing goals induced by endogenous and exogenous outliers}

In the presence of AO's we would like to attenuate their effect, while
when there are IO's, the usual goal in online applications would be tracking, i.e.;  detect
 structural changes as fast as possible and/or react on the changed situation.
 A situation where both AO's and IO's may occur poses an
 identification problem:
 Immediately after a suspicious observation we cannot tell IO type from AO type.
 Hence a simultaneous treatment of both types will only be
 possible with a certain delay---see \citet{Ru:10}.
%

%
\subsection{Classical Method: Kalman--Filter} \label{classKalmss}

\subsubsection{Filter Problem}
The most important problem in SSM formulation is to reconstruct the unobservable states $X_t$ based
on the observations $Y_t$. For abbreviation let us denote
\begin{equation}
Y_{1:t}=(Y_1,\ldots,Y_t), \quad Y_{1:0}:=\emptyset
\end{equation}
Then using MSE risk, the optimal reconstruction is distinguished as
\begin{equation}
\Ew \big| X_t-f_t\big|^2 = \min\nolimits_{f_t},\qquad f_t \mbox{ measurable w.r.t.\ } \sigma(Y_{1:s})  \label{MSEpb}
\end{equation}
Depending on $s$ this is a prediction ($s<t$),  a filtering ($s=t$)
and a smoothing problem ($s>t$). In the sequel we
will confine ourselves to the filtering problem.
{\subsubsection{Kalman--Filter} \label{KalmDef}
It is well-known that the general solution to \eqref{MSEpb} is the corresponding
conditional expectation  $\Ew[ X_t|Y_{1:s}]$. Except for the Gaussian case, this exact
conditional expectation may be computational too expensive.
Hence similar to  the Gauss-Markov setting, it is common to restrict
oneself to linear filters. In this context, the seminal work of \citet{Kal:60} (discrete-time setting)
and \citet{K:B:61} (continuous-time setting) introduced effective schemes to
compute this optimal linear filter $X_{t|t}$. In discrete time, we reproduce it here for
later reference:
\begin{align}
\!\!\!\!\!\!\mbox{Init.: }&\!\!\!\!&
 X_{0|0} &= a_0,\qquad &\Sigma_{0|0}&=Q_0 \label{bet1}
 \\
\!\!\!\!\!\!\mbox{Pred.: }&\!\!\!\!
& X_{t|t-1}&= F_t  X_{t-1|t-1} , \qquad &\Sigma_{t|t-1}&=F_t\Sigma_{t-1|t-1}F_t^\tau + Q_t\label{bet2}
\\
\!\!\!\!\!\!\mbox{Corr.: }
&\!\!\!\!& X_{t|t}&=  X_{t|t-1} + M^0_t \Delta Y_t, \label{bet3}
\!\!\!\quad&  \Sigma_{t|t} &= (\EM_p-M^0_tZ_t)\Sigma_{t|t-1}\\
\!\!\!\!\!\!\mbox{for}&&\Delta X_t &=X_{t}-X_{t|t-1},
\!\!\!\quad&  \Delta Y_t &= Y_t-Z_t X_{t|t-1}=Z_t \Delta X_{t}+\ve_t,\nonumber\\
&&\Delta_t&=Z_t \Sigma_{t|t-1}Z_t^\tau + V_t,
\!\!\!\quad& M^0_t&=\Sigma_{t|t-1}Z_t^\tau \Delta_t^{-} 
\end{align}
and where $\Delta X_t$ is the prediction error, 
$\Delta Y_t$ the observation innovation, and
$\Sigma_{t|t}=\Cov(\Delta X_t)$, $\Sigma_{t|t-1}=\Cov(X_t-X_{t|t-1})$,
$\Delta_t = \Cov(\Delta Y_t)$;
$M^0_t$ is the so-called \textit{Kalman gain}, and
$\Delta_t^-$ stands for the Moore-Penrose inverse of $\Delta_t$.
%
\subsubsection{Optimality of the Kalman--Filter}\label{optKalm}

Realizing that $M^0_t \Delta Y_t$ is an orthogonal projection, it is not hard to see 
that the (classical) Kalman filter solves problem~\eqref{MSEpb} (for $s=t$)
among all linear filters.
Using orthogonality of $\{\Delta Y_t\}_t$ once again, we may setup similar recursions for
the corresponding best linear smoother; see, e.g.\ \citet{A:M:79}, \citet{D:K:01}.
Under normality, i.e.; assuming \eqref{normStart},
we even have $ X_{t|t[-1]}=\Ew[ X_t|Y_{1:t[-1]}]$, i.e.; the Kalman filter is optimal among all
$Y_{1:t[-1]}$-measurable filters. It also is the posterior mode of ${\cal L}( X_t|Y_{1:t})$
and  $ X_{t|t}$ can also be seen to be the ML estimator for a regression model
with random parameter; for the last property, compare \citet{D:H:72}.
\subsubsection{Features of the Kalman--Filter}

The Kalman filter stands out for its clear and understandable structure:
it comes in three steps, all of which are linear, hence cheap to evaluate and
easy to interpret. Due to the Markovian structure of the state equation,
all information from the past useful for the future may be captured in the
value of $X_{t|t-1}$, so only very limited memory is needed.\\
From a (distributional) Robustness point of view, this linearity at the same time is a weakness
of this filter---$y$ enters unbounded into the correction step which
hence is prone to outliers.
A good robustification of this approach would try to retain as much as
possible from these positive properties of the Kalman filter while revising
the unboundedness in the correction step.
\section{The rLS as optimally-robust filter} \label{rLSsec}
\subsection{Definition}\label{rLSDefss}
\subsubsection{{\it r}obustifying {\it r}ecursive  {\it L}east {\it S}quares: rLS}
In a first step we limit ourselves to AO's. Notationally, where clear from
the context, we suppress the time index $t$.
As no (new) observations
enter the initialization and prediction steps, these steps may be left unchanged.
In the correction step, we will have to modify the orthogonal projection
present in \eqref{bet3}.
Suggested by H.~Rieder and worked out in \citet[ch.~2]{Ru:01},
the following robustification of the correction step
is straightforward:
Instead of $M^0\Delta Y$, we use a Huberization of this correction 
\begin{equation} \label{HbDef}
H_b(M^0\Delta Y) =  M^0\Delta Y \min\{1, 
                b/\big|M^0\Delta Y\big|\}
\end{equation}
for some suitably chosen clipping height $b$. Apparently, this proposal removes
the unboundedness problem of the classical Kalman filter while still remaining
reasonably simple, in particular this modification is non-iterative, hence especially useful
for online-purposes.\\

\subsubsection{Choice of the clipping height $b$}
For the choice of the clipping height $b$, we have two proposals. Both are based on the simplifying assumption that
$\Ew_{\rm\SSs id}  [\Delta X | \Delta Y]$ is linear,
which will turn out to  only be approximately right.
The first one, an Anscombe criterion, chooses $b=b(\delta)$ such that
    \begin{equation}
     \Ew_{\rm\SSs id} \big|\Delta  X-H_b(M^0 \Delta Y)\big|^2
         \stackrel{!}{=}(1+\delta) \Ew_{\rm\SSs id} \big|\Delta  X-M^0 \Delta Y\big|^2
         \label{deltakrit}
    \end{equation}
 $\delta$ may be interpreted as ``insurance premium''
to be paid in terms of loss of efficiency in the ideal model compared to the optimal procedure in this (ideal) setting,
i.e.; the classical Kalman filter.

The second criterion for a given radius $r\in[0,1]$  of the
(SO-) neighborhood ${\cal U}^{\rm\SSs SO}(r)$  determines $b=b(r)$ such that
    \begin{equation}
     (1-r) \Ew_{\rm\SSs id} (|M^0 \Delta Y |-b)_+ \stackrel{!}{=} r b
         \label{deltakrit2}
    \end{equation}
Assuming linear ideal conditional expectations, this will produce the minimax-MSE procedure for ${\cal U}^{\rm\SSs SO}(r)$ according to Theorem~\ref{ThmSO} below.

One might object that \eqref{deltakrit2} assumes $r$ to be known, which in practice hardly ever is true. If $r$ is unknown however,
we translate an idea worked out in \citet{RKR08}: Assume we have limited knowledge about $r$, say $r\in[r_l,r_u]$, $0\leq r_l<r_u\leq 1$.
Then we distinguish a \textit{least favorable radius} $r_0$ defined in the following expressions
\begin{eqnarray}
r_0&=&{\rm argmin}_{s \in [r_l,r_u]}\rho_0(s),\qquad \rho_0(s) = \max_{r\in[r_l,r_u]} \rho(r,s), \label{r01} \label{rsline}\\
\rho(r,s)&=&\frac{\max_{P\in{\cal U}^{\rm\SSs SO}(r)}{\rm MSE}_P\big({\rm rLS}(b(s))\big)}{\max_{P'\in {\cal U}^{\rm\SSs SO}(r)}{\rm MSE}_{P'}\big({\rm rLS}(b(r))\big)}\label{r02}
\end{eqnarray}
and use the corresponding $b(r_0)$. Procedure ${\rm rLS}(b(r_0))$ then minimizes the maximal inefficiency $\rho_0 (s)$ among all procedures ${\rm rLS}(b(r))$, i.e.;
each rLS for some clipping height $b(r)\not=b(r_0)$ has an inefficiency no smaller than $\rho_0(r_0)$ for some $r' \in[r_l,r_u]$.
%
%
%
Radius $r_0$ can be computed quite effectively by a bisection method: Let
\begin{eqnarray}
A_r&=&\Ew_{\rm\SSs id} \Big[ \tr \Cov_{\rm\SSs id}[\Delta X|\Delta  Y^{\rm\SSs id}] + (|M^0 \Delta Y^{\rm\SSs id}|-b(r))_+^2\Big] \label{Ardef}\\
B_r&=&\Ew_{\rm\SSs id}\Big[|M^0 \Delta Y^{\rm\SSs id}|^2 - (|M^0 \Delta Y^{\rm\SSs id}|-b(r))_+^2\Big] + b(r)^2 \label{Brdef}
\end{eqnarray}
Then the following analogue to \citet[Lemma~2.2.3]{Ko05} holds:
\begin{Lem}\label{Lem223}
In equations~\eqref{r01} and \eqref{r02}, let $r,s$ vary in $[r_l,r_u]$ with $0\leq r_l<r_u\leq 1$.
Then
  \begin{equation} \label{Lem223c}
  \rho_0(r)= \max\{A_r/A_{r_l},B_r/B_{r_u}\}
  \end{equation}
  and there exists some $\tilde r_0\in[r_l,r_u]$ such that
$ A_{\tilde r_0}/A_{r_l}=B_{\tilde r_0}/B_{r_u}$.
  This $\tilde r_0$ is least favorable, i.e.,
$  \min_{r\in[r_l,r_u]} \rho_0(r)=\rho_0(\tilde r_0)$.
Moreover, if $r_u=1$, $r_0=r_u$.
  \end{Lem}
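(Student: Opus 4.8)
The plan is to first turn the quotient \eqref{r02} into an explicit function of the two radii by invoking the saddle-point of Theorem~\ref{ThmSO}, and then to run the radius-minimax argument of \citet{RKR08}. Writing $S=|M^0\Delta Y^{\rm\SSs id}|$ and using the working assumption that $\Ew_{\rm\SSs id}[\Delta X|\Delta Y^{\rm\SSs id}]=M^0\Delta Y^{\rm\SSs id}$ is linear, the cross term in $\Ew_{\rm\SSs id}|\Delta X-H_{b}(M^0\Delta Y^{\rm\SSs id})|^2$ vanishes upon conditioning on $\Delta Y^{\rm\SSs id}$, so the ideal MSE of ${\rm rLS}(b(s))$ is exactly $A_s$, while the least favorable ${\cal L}(Y^{\SSs\rm di})$ clips the correction to length $b(s)$ and supplies the breakdown contribution. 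Theorem~\ref{ThmSO} thereby renders numerator and denominator of \eqref{r02} in closed form, with the denominator being the minimax risk at radius $r$; here \eqref{deltakrit2} is precisely the stationarity condition singling out $b(r)$ as minimax clipping height, and differentiating it yields the envelope identity $A_r'+rB_r'=0$. Hence the minimax risk is the lower envelope of the affine-in-radius family of worst-case risks, so it is concave with derivative $B_r$.

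The crucial step is then to show that the inner maximum in \eqref{r01} is always attained at an endpoint of $[r_l,r_u]$. I would compute the logarithmic derivative of $\rho(\cdot,s)$ in the ball radius: since the numerator is the affine worst-case risk of the fixed procedure ${\rm rLS}(b(s))$ and the denominator is the concave envelope tangent to it at the matched radius $r=s$, the only stationary point is that matched radius, where $\rho=1$ is the global minimum (the envelope being a pointwise lower bound). Thus $\rho(\cdot,s)$ falls and then rises, giving $\rho_0(s)=\max\{\rho(r_l,s),\rho(r_u,s)\}$. Identifying the lower-endpoint value with the ideal-efficiency ratio $A_s/A_{r_l}$ and the upper-endpoint value with the breakdown ratio $B_s/B_{r_u}$—using that at the smaller radius the variance/clipping part $A$ dominates while at the larger radius the breakdown part $B$ does, together with \eqref{deltakrit2}—then yields \eqref{Lem223c} (in the notation of \eqref{r02}, where $s$ denotes the procedure and $r$ the ball radius). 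I expect this endpoint identification to be the main obstacle, as it is exactly where the precise saddle-point form of Theorem~\ref{ThmSO}, rather than the bare convex-combination heuristic, has to be invoked.

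Granting \eqref{Lem223c}, the rest is monotonicity bookkeeping in the procedure radius $s$. From \eqref{deltakrit2}, $b(s)$ is strictly decreasing in $s$ with $b(0)=\infty$ and $b(1)=0$; since $A_s=\Ew_{\rm\SSs id}[\tr\Cov_{\rm\SSs id}[\Delta X|\Delta Y^{\rm\SSs id}]]+\Ew_{\rm\SSs id}(S-b(s))_+^2$ increases as $b(s)$ falls while $B_s=\Ew_{\rm\SSs id}[S^2-(S-b(s))_+^2]+b(s)^2$ decreases, the quotient $A_s/A_{r_l}$ is increasing with value $1$ at $s=r_l$, and $B_s/B_{r_u}$ is decreasing with value $1$ at $s=r_u$. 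The continuous map $s\mapsto A_s/A_{r_l}-B_s/B_{r_u}$ is then $\le 0$ at $s=r_l$ and $\ge 0$ at $s=r_u$, so by the intermediate value theorem it has a zero $\tilde r_0\in[r_l,r_u]$, i.e.\ $A_{\tilde r_0}/A_{r_l}=B_{\tilde r_0}/B_{r_u}$; and being the maximum of an increasing and a decreasing branch meeting at $\tilde r_0$, the function $\rho_0$ is minimized there, so $\tilde r_0$ is least favorable. Finally, if $r_u=1$ then $b(r_u)=0$ gives $B_{r_u}=\Ew_{\rm\SSs id}S^2-\Ew_{\rm\SSs id}S^2+0=0$, whence $B_s/B_{r_u}=+\infty$ for every $s<r_u$ while $\rho_0(r_u)=A_{r_u}/A_{r_l}$ stays finite; the minimizer is therefore forced to $r_0=r_u$.
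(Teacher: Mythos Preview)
Your route coincides with the paper's up to the point where you obtain $\rho_0(s)=\max\{\rho(r_l,s),\rho(r_u,s)\}$: the paper, too, shows that $\rho(\cdot,s)$ decreases on $[r_l,s]$ and increases on $[s,r_u]$, though it does this by a direct algebraic inequality based on the optimality of $A_r+rB_r$ rather than via your envelope/concavity picture. Both arguments are fine here.

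The genuine gap is your endpoint identification. You claim $\rho(r_l,s)=A_s/A_{r_l}$ and $\rho(r_u,s)=B_s/B_{r_u}$, but with the explicit form you yourself derived, $\rho(r,s)=(A_s+rB_s)/(A_r+rB_r)$, one has $\rho(r_l,s)=(A_s+r_lB_s)/(A_{r_l}+r_lB_{r_l})$, which is \emph{not} $A_s/A_{r_l}$ unless $r_l=0$; likewise at $r_u$. The heuristic ``at the smaller radius the $A$-part dominates'' does not yield an equality. The paper obtains \eqref{Lem223c} by an entirely different and much shorter device you do not mention: the elementary inequality $(a+b)/(c+d)\le\max(a/c,b/d)$, applied with $a=A_s$, $b=rB_s$, $c=A_r$, $d=rB_r$, gives $\rho(r,s)\le\max\{A_s/A_r,\,B_s/B_r\}$, and then monotonicity of $A_\cdot$ and $B_\cdot$ pushes this to $\max\{A_s/A_{r_l},\,B_s/B_{r_u}\}$ uniformly in $r$. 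So \eqref{Lem223c} arises as a bound coming from this inequality, not from evaluating $\rho$ at the endpoints.

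A second, smaller gap: once \eqref{Lem223c} is in place, your ``max of an increasing and a decreasing branch is minimized where they cross'' argument locates the minimizer $\tilde r_0$ of the \emph{bound} $s\mapsto\max\{A_s/A_{r_l},B_s/B_{r_u}\}$, but it does not by itself show that $\tilde r_0$ minimizes the actual $\rho_0$. The paper closes this by a second monotonicity computation, this time of $\rho(r,\cdot)$ in the procedure index $s$ for fixed $r$ (again via the optimality of $A_r+rB_r$), which you omit. Your treatment of the case $r_u=1$ is the same as the paper's and is fine.
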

  In particular, the last equality shows that one should restrict $r_u$ to be strictly smaller than $1$
  to get a sensible procedure.
\subsection{(One-Step)-Optimality of the rLS}\label{1stpOpt}
The (so-far) ad-hoc robustification proposed in the rLS filter has some remarkable optimality
properties: Let us first forget about the time structure and instead consider
the following  simplified, but general ``Bayesian'' model:\medskip\\ We have an unobservable
but interesting signal $X\sim P^X(dx)$, where for technical reasons we assume that in the ideal
model $\Ew |X|^2 <\infty$. Instead of $X$ we rather observe a random variable $Y$ taking values
in an arbitrary space of which
we know the ideal transition probabilities; more specifically, we assume that these ideal
transition probabilities for almost all $x$ have densities w.r.t.\ some measure $\mu$,
\begin{equation} \label{decM}
P^{Y|X=x}(dy)=\pi(y,x)\,\mu(dy)
\end{equation}
Our approach uses MSE as accuracy criterion for the reconstruction,
so is limited to ranges of $X$ where this makes sense. On the other hand it is this
reduction to the ``Bayesian'' model which makes the generalizations sketched in
section~\ref{idmodel} possible.
%
As (wide-sense) AO model,  we consider an SO outlier model, i.e.;
\begin{equation}
 Y^{\rm\SSs re} = (1-U)  Y^{\rm\SSs id} + U Y^{\rm\SSs di}, \qquad U\sim {\rm Bin}(1,r) \label{YSO}
\end{equation}
for $U$ independent of $(X,Y^{\rm\SSs id})$ and $(X,Y^{\rm\SSs di})$ and some distorting random variable $Y^{\rm\SSs di}$ for which, in a slight variation of condition~\eqref{indep}
 we assume
 \begin{equation}\label{indep2}
 Y^{\rm\SSs di},\; X\quad \mbox{stochastically independent}
 \end{equation}
and the law of which is arbitrary, unknown
and uncontrollable. As a first step consider the set $\partial{\cal U}^{\rm\SSs SO}(r)$ defined
as
\begin{equation}
\partial{\cal U}^{\rm\SSs SO}(r)=\Big\{{\cal L}(X,Y^{\rm\SSs re}) \,|\, Y^{\rm\SSs re} \;\mbox{acc. to \eqref{YSO} and \eqref{indep2}}\Big\}
\end{equation}
Because of condition~\eqref{indep2}, in the sequel we refer to the random variables $Y^{\rm\SSs re}$ and $Y^{\rm\SSs di}$ instead of their
respective (marginal) distributions only, while in the common gross error model as present in \eqref{AOdef} or \eqref{IOdef}, reference to the respective distributions would suffice.
  Condition~\eqref{indep2}
 also entails that in general, contrary to the usual setting,
$\Lw(X,Y^{\rm\SSs id})$ is not element of  $\partial{\cal U}^{\rm\SSs SO}(r)$, i.e.; not representable itself as some $\Lw(X,Y^{\rm\SSs re})$
in this neighborhood.
As corresponding (convex) neighborhood we define
\begin{equation}
{\cal U}^{\rm\SSs SO}(r)= \bigcup_{0\leq s \le r} \partial{\cal U}^{\rm\SSs SO}(s)
\end{equation}
Of course, ${\cal U}^{\rm\SSs SO}(r)$ contains $\Lw(X,Y^{\rm\SSs id})$.
In the sequel where clear from the context we drop the superscript ${\rm\Ts SO}$ and the argument $r$.\medskip\\
With this setting we may formulate two typical robust optimization problems:
\paragraph{Minimax-SO problem}
Minimize the maximal MSE on an SO-neighborhood, i.e.; find a measurable reconstruction $f_0$  for $X$ s.t.\
\begin{align}
\quad&\max\nolimits_{{\cal U}}\, \Ew_{\SSs\rm re} |X-f(Y^{\rm\SSs re})|^2 = \min\nolimits_f{}! \label{minmaxSO}
\end{align}
\paragraph{Lemma5-SO problem}
As an analogue to \citet[Lemma~5]{Ha:68}, minimize the MSE in the ideal model but
subject to bound on the bias to be fulfilled on the whole neighborhood, i.e.; find a measurable reconstruction
$f_0$  for $X$ s.t.\
\begin{align}
\quad& \Ew_{\SSs\rm id} |X-f(Y^{\rm\SSs id})|^2 = \min\nolimits_f{}! \quad
   \mbox{s.t.}\;\sup\nolimits_{\cal U}\big|\Ew_{\SSs\rm re} f(Y^{\rm\SSs re})-\Ew X \big|\leq b \label{Lem5SO}
\end{align}

The solution to both problems can be summarized as
\begin{Thm}[Minimax-SO, Lemma5-SO]\label{ThmSO} 
\begin{enumerate}
\item[(1)]
In this situation, there is a \emph{saddle-point\/} $(f_0, P_0^{Y^{\rm\SSs di}})$ for Problem~\eqref{minmaxSO}
\begin{eqnarray}
f_0(y)\!\!&\!\!:=\!\!&\!\!\Ew X +D(y)w_r(D(y)),\quad w_r(z)=\min\{1, \rho/|z|\} \label{f0def}\\
P_0^{Y^{\rm\SSs di}}(dy)\!\!&:=\!\!&\!\!\Tfrac{1-r}{r} ( \big|D(y)\big|\!/\!\rho\,-1)_{\SSs +}\,\, P^{Y^{\rm\SSs id}}(dy) \label{P0def}
\end{eqnarray}
where $\rho>0$ ensures that $\int \,P_0^{Y^{\rm\SSs di}}(dy)=1$ and
\begin{equation} \label{Ddef}
D(y)=\Ew_{\SSs\rm id}[X|Y=y]-\Ew X
\end{equation}
The value of the minimax risk of Problem~\eqref{minmaxSO} is
\begin{equation} \label{sadvalSO}
\tr \Cov (X) -(1-r)\Ew_{\rm\SSs id}\big[\,|D(Y^{\rm\SSs id})|^2 w_r(Y^{\rm\SSs id})\big]
\end{equation}
\item[(2)] $f_0$ from \eqref{f0def} also is the solution to Problem~\eqref{Lem5SO} for $b=\rho/r$.
\item[(3)] If $\Ew_{\SSs\rm id}[X|Y]$ is linear in $Y$, i.e.; $\Ew_{\SSs\rm id}[X|Y]=MY$ for some matrix $M$, then necessarily
\begin{equation}
M=M^0=\Cov(X,Y)\Var Y^{-}
\end{equation}
or in SSM formulation: $M^0$ is just the classical Kalman gain  and $f_0$ the (one-step) rLS.
\end{enumerate}
\end{Thm}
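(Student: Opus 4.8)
The plan is to prove this purely from the $L^2$-projection property of the conditional expectation, with no recourse to the distributional assumptions of the ideal model. Since we assume $\Ew_{\SSs\rm id}|X|^2<\infty$ (and, for this part, that $Y$ is Euclidean and square-integrable, as in the linear reference setting), $\Ew_{\SSs\rm id}[X\,|\,Y]$ is the orthogonal projection of $X$ onto $L^2(\sigma(Y))$. Its defining property is that the residual is orthogonal to every square-integrable function of $Y$; in particular, testing against the coordinate functions of $Y$ yields
\[
\Ew_{\SSs\rm id}\big[(X-\Ew_{\SSs\rm id}[X\,|\,Y])\,Y^\tau\big]=0 .
\]
First I would substitute the hypothesis $\Ew_{\SSs\rm id}[X\,|\,Y]=MY$ into this identity.

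Substituting gives $\Ew_{\SSs\rm id}[XY^\tau]=M\,\Ew_{\SSs\rm id}[YY^\tau]$. Taking expectations of the hypothesis also gives $\Ew X=M\,\Ew Y$, so subtracting the outer product of the means turns the previous display into the \emph{normal equation}
\[
\Cov(X,Y)=M\,\Var Y .
\]
Thus any linear version of the ideal conditional mean must satisfy this single matrix equation, and nothing further from the model is needed. In the SSM translation $X\leftrightarrow\Delta X_t$, $Y\leftrightarrow\Delta Y_t$ both residuals are centered, so $\Ew Y=0$ and the centering step is vacuous.

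Next I would check that $M^0:=\Cov(X,Y)\,\Var Y^-$ solves the normal equation. Writing $P:=\Var Y^-\Var Y$ for the orthogonal projector onto $\operatorname{range}(\Var Y)$ (using that $\Var Y$ is symmetric), one has $M^0\Var Y=\Cov(X,Y)\,P$; and since $\Cov(X,Y)=M\Var Y$ has its rows in $\operatorname{range}(\Var Y)$, the projector acts trivially, so $\Cov(X,Y)\,P=\Cov(X,Y)$. Hence $M^0\Var Y=\Cov(X,Y)=M\Var Y$, and $M^0$ is admissible.

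The hard part will be the precise meaning of ``necessarily $M=M^0$'' when $\Var Y$ is singular, since then the normal equation does not determine $M$ as a matrix. The way out is to read the assertion as an identification of the linear filter $MY$ almost surely (equivalently, on the support of $Y$). Any two solutions $M,M'$ obey $(M-M')\Var Y=0$, i.e.\ the rows of $M-M'$ lie in $\ker\Var Y=\operatorname{range}(\Var Y)^{\perp}$; and because a centered random vector sits almost surely in the range of its covariance, $Y-\Ew Y\in\operatorname{range}(\Var Y)$ $P^{Y^{\SSs\rm id}}$-a.s. Therefore $(M-M')(Y-\Ew Y)=0$ a.s., so the filter value coincides with $M^0Y$ a.s., and in the centered SSM case $\Ew Y=0$ this is literally $MY=M^0Y$ a.s. It is exactly the Moore–Penrose inverse that supplies the canonical representative $M^0=\Cov(X,Y)\,\Var Y^-$, which in SSM form is the classical Kalman gain and turns $f_0$ into the one-step rLS.
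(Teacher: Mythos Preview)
Your proposal addresses only part~(3) of the theorem; parts~(1) and~(2) are untouched, and those carry the real weight of the result. Part~(1) requires you to exhibit and verify a saddle-point: the paper first solves the inner $\max_{\partial{\cal U}}\min_f$ by computing $\Ew_{\rm\SSs re}[X|Y^{\rm\SSs re}]$ for a generic contaminating density $q$, reducing to a convex minimization of a functional $F(q)$ on $\{q\ge 0,\int q\,d\mu=1\}$, and solving it via a Lagrangian to obtain the least favorable $P_0^{Y^{\rm\SSs di}}$ of \eqref{P0def}. One then checks that $f_0=\Ew_{{\rm re};\hat P}[X|Y^{\rm\SSs re}]$ has the claimed clipped form \eqref{f0def}, and---crucially---that the risk of $f_0$ under \emph{any} contaminating $P$ is maximized at $\hat P$, so that $\min_f\max=\max\min_f$. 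Finally one extends from the boundary $\partial{\cal U}(r)$ to the full ball ${\cal U}(r)$ by a monotonicity argument in the radius. Part~(2) is a separate Lagrangian argument identifying the bias bound with $\sup|\tilde f|$ and minimizing pointwise in $y$. None of this follows from the $L^2$-projection property.

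For part~(3) itself, your argument is correct and is essentially what the paper does in one sentence (``the corresponding optimal matrix $M^0$ is just the respective Fourier coefficient''). You have been more careful than the paper about the singular case, correctly interpreting ``$M=M^0$'' as equality of the linear map $y\mapsto My$ almost surely rather than as a matrix identity, and using that $Y-\Ew Y$ lives a.s.\ in $\operatorname{range}(\Var Y)$. That refinement is fine, but it does not substitute for the missing parts.
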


\subsubsection{Identifications for the SSM context}
Identifying $X$ in model~\eqref{decM} with $\Delta X_t$
and $\pi(y,x)\,\mu(dy)$ with  ${\cal L}(Z_t \Delta X_t +\ve_t)(dy)$,
our ``Bayesian'' Model~\eqref{decM} covers the SSM context. Hence,
if $\Delta X_t$ is normal, (3) applies and rLS is SO-minimax.

\subsubsection{Example for SO-least favorable densities}
To illustrate the result of Theorem~\ref{ThmSO}, we have plotted the
ideal density of $P^{Y^{\rm\SSs id}}$, the (least favorable) contaminated density of $P_0^{Y^{\rm\SSs re}}$,
and the (least favorable) contaminating density of $P_0^{Y^{\rm\SSs di}}$ in Figure~\ref{Fig2.}.

\begin{figure}[!htb]
\begin{center}
\includegraphics[height=7cm, width=10.5cm]{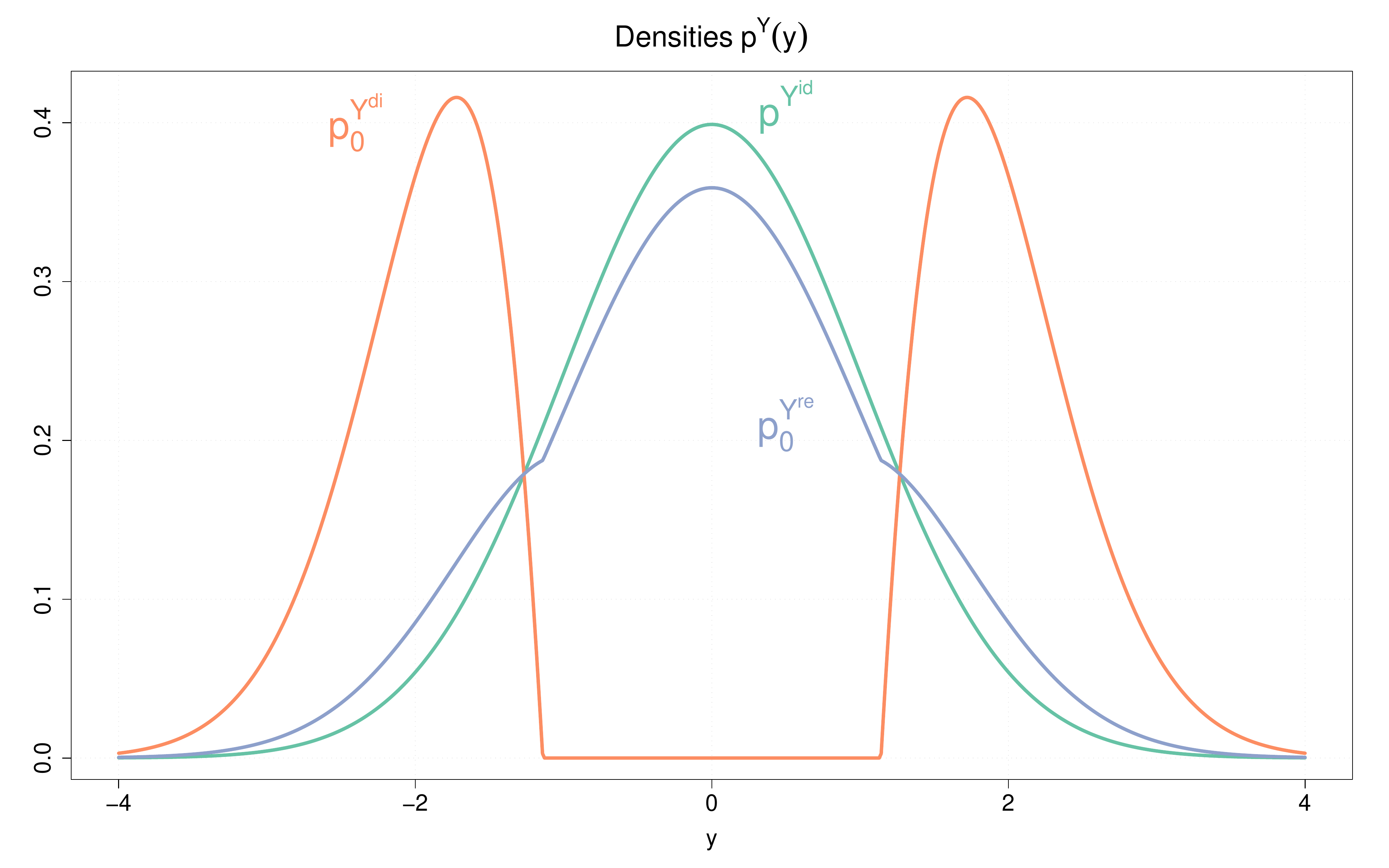}
\caption{\label{Fig2.}Densities of $P^{Y^{\rm\SSs id}}$, $P_0^{Y^{\rm\SSs re}}$, $P_0^{Y^{\rm\SSs di}}$ for
${P^X}={P^{\ve}}={\cal N}(0,1)$, $r=0.1$; note the ``thin'' tails.}
\end{center}
\end{figure}

\begin{Rem}\rm\small  \label{rem33}
  \begin{ABC}
\item Without using this name, SO neighborhoods have already been used by \citet{B:S:93} and \citet{B:P:94},
although only in a one-dim.\ model.
\item Explicit solutions to robust optimization problems in a finite sample setting are rare,
which is why one usually appeals to asymptotics instead.  Important exceptions are
\citet{Hu:68b}, \citet{Hu:Str:73}, and even there, in the former case one is limited to a special loss function
and to one dimension. Our results however are valid in a finite sample context and in whole
generality.
\item Although the structure of our model resembles a location model---interpre\-ting $X$ as a random location parameter---%
our saddle-point differs from the one obtained in \citet{Hu:64}. To see this, let us look at the tails of the least
favorable $P_0^{Y^{\rm\SSs re}}$ assuming a Gaussian model for simplicity:
while in Huber's setting the tails decay as $c e^{-k|x|}$ for some $c,k>0$, in our
setting they decay as $c' |x| e^{-x^2/2}$ so appear even ``less harmful'' than in the location case.
\item Attempts to solve corresponding optimization problems in a (narrow-sense) AO neighborhood are much more
  difficult and only partial results in this context have been obtained in \citet{Dono:78}, \citet{Bi:81},
  and \citet{Bi:Co:83};
  in particular one knows, that in the setup of our example the least favorable $\tilde P^{\ve}=P_0^{\ve^{\rm\SSs di}}$
  must be discrete with only possible accumulation points $\pm \infty$. In addition, existence of a saddle-point
  follows from abstract compactness and continuity arguments, but in order to obtain specific solutions one has to recur
  to numeric approximation techniques as e.g.\ worked out in \citet[sec.~8.3]{Ru:01}; in particular, one obtains redescending
  optimal filters. 
\item Redescenders are also used in the ACM filter  by \citet{Ma:Ma:77} which formally translates
the \citet{Hu:64} minimax variance result to this dynamic setting (formally, because of the randomness of
the ``location parameter'' $\Delta X$).
It should be noted though that the least-favorable SO-situation for the ACM then is not in the tails but rather where the
corresponding $\psi$ function takes its maximum in absolute value.
An SO outlier could easily place contaminating mass on this maximum, while this is much harder if not impossible
to achieve in a (narrow-sense) AO situation.
Hence in simulations where we produce ``large'' outliers, the ACM filter tends to outperform the rLS filter,
as these ``large'' outliers are least favorable for the rLS but not for the ACM.
The ``inliers'' producing the least favorable situation for the ACM on the other hand will be much harder
to detect on na\"ive data inspection than ``large'' outliers, in particular in higher dimensions.
 \end{ABC}
\end{Rem}

\subsection[Back in the original model]{Back in the $\Delta X$ Model for $t>1$}\label{backoindyn}

So far, in this section, we have ignored the fact that our $X$ in model~\eqref{decM} resp.\ $\Delta X_t$ in the SSM
context will stem from a past which has already used our robustified version of the Kalman filter.
In particular, the law of $\Delta X_t$ (even in the ideal model) is not straightforward and
hence (ideal) conditional expectation appearing in the optimal solution $f_0$ in Theorem~\ref{ThmSO}
in practice are not so easily computable.

\subsubsection{Approaches to go back}
The issue to assess the law $\Delta X_t$ from a non-linear filter  past
is common to other robustifications, and hence there already exist a couple of approaches to deal with it:
\citet{Ma:Ma:77} and \citet{Ma:79}  assume ${\cal L}(\Delta  X_t)$  {\em normal} and propose
using robust location estimators (with redescending $\psi$-function) as alternatives to the
linear correction step. Contradicting this assumption in the rLS case, we have the following proposition

\begin{Prop}\label{nonorm}
  Whenever in one correction step in the $\Delta X_t$ past one has used the rLS-filter, then $\{\Delta  X_t\}$ (as a process) cannot be normally distributed;
  this assertion cannot even hold asymptotically, as long as
  \begin{equation} \label{btbound}
  0<\liminf_t b_t \leq \limsup_t b_t < \infty
  \end{equation}
\end{Prop}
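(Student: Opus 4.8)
The plan is to prove the finite-time assertion directly (no contradiction hypothesis needed) and then to reuse the very same one-step mechanism for the asymptotic statement. The common engine is a purely algebraic splitting of the rLS correction. Writing $e_t=X_t-X_{t|t}$ for the filtering error, the state equation and the prediction step give the exact identity $\Delta X_{t+1}=F_{t+1}e_t+v_{t+1}$, while the rLS correction gives $e_t=\Delta X_t-H_{b_t}(M^0_t\Delta Y_t)$. Setting $W:=M^0_t\Delta Y_t$ and $G:=\Delta X_t-W$ (the \emph{classical}, linear Kalman error), and using $H_b(w)=w\min\{1,b/|w|\}$, I split $e_t=G+R(W)$ with $R(w):=w-H_{b_t}(w)=(|w|-b_t)_{\SSs+}\,w/|w|$. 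The point of this splitting is that $R(W)$ vanishes identically on $\{|W|\le b_t\}$ and equals a radial shift of norm $|W|-b_t$ on $\{|W|>b_t\}$.

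For the finite-time claim I would apply this at the \emph{first} time $s$ at which rLS is used. Up to time $s-1$ every correction is the linear Kalman step, so $\Delta X_s$ is jointly Gaussian with $\ve_s$ and $\Cov(\Delta X_s)$ equals the classical Riccati value $\Sigma_{s|s-1}$ that defines $M^0_s$. Hence the defining orthogonality of the Kalman gain, $\Cov(G,\Delta Y_s)=\Sigma_{s|s-1}Z_s^\tau-M^0_s\Delta_s=0$, holds, and joint Gaussianity upgrades it to independence $G\perp W$. Therefore $\Delta X_{s+1}=N+F_{s+1}R(W)$ with $N:=F_{s+1}G+v_{s+1}$, where $N$ is Gaussian (an independent sum of Gaussians) and $N\perp F_{s+1}R(W)$ because $(G,v_{s+1})\perp W$. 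Now $F_{s+1}R(W)$ puts mass $\Ew\,1_{\{|W|\le b_s\}}\in(0,1)$ on the single point $0$ — here the hypotheses $0<b_s<\infty$ and $W$ nondegenerate Gaussian are exactly what force this probability into $(0,1)$ — and it is not a.s.\ $0$ (as $F_{s+1}\neq0$ and $R(W)$ ranges over all directions), so it has an atom and cannot be Gaussian. By Cramér's decomposition theorem a Gaussian is never the sum of an independent Gaussian and a non-Gaussian variable, so $\Delta X_{s+1}$ is non-Gaussian; a fortiori $\{\Delta X_t\}$ is not a Gaussian process.

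For the asymptotic part I would argue by contradiction, assuming $\Lw(\Delta X_t)$ approaches Gaussianity. Using $0<\liminf_t b_t\le\limsup_t b_t<\infty$ I extract a subsequence along which $b_t\to b_\infty\in(0,\infty)$ (and, in the stabilising-parameter regime, the gains and covariances converge), so the recursion passes to a distributional fixed-point equation for the purported Gaussian limit $\Phi$. Feeding $\Delta X\sim\Phi$ back through the same splitting, and writing the Gaussian projection $G=AW+G^\perp$ with $G^\perp\perp W$, expresses the limit as an independent sum of a Gaussian and $F_\infty(AW+R(W))$; Cramér again forces $F_\infty(AW+R(W))$ to be Gaussian. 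The $b_t$-bounds enter precisely to exclude the two genuinely re-Gaussianising regimes: $b_\infty=\infty$ switches off the clipping (rLS $\to$ Kalman) and $b_\infty=0$ switches off the correction, and only these make $R\equiv0$.

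The main obstacle is that in the asymptotic regime $M^0$ is still the classical-Riccati gain, which need not match $\Cov(\Delta X_\infty)$; the clean orthogonality $A=0$ of the finite-time step is thereby lost, and $F_\infty(AW+R(W))$ no longer carries an atom at $0$. The atom argument must then be replaced by a smoothness/analyticity argument: the map $w\mapsto Aw+R(w)$ is Lipschitz and affine on $\{|w|<b_\infty\}$ but has a discontinuous Jacobian — a ``crease'' — across the sphere $\{|w|=b_\infty\}$, hence is not affine, and pushing a nondegenerate Gaussian through a non-affine map with such a codimension-one crease yields a law whose density fails the real-analyticity of a Gaussian. Making this survive the passage to the limit (tightness and uniform integrability of second moments, continuity of the gain map, and a lower bound on the ``crease mass'' $P(|W|>b_\infty)>0$ that the $b_t$-bounds guarantee) is where the real work lies; reducing the multivariate crease to a one-dimensional kink along a direction $c=F_\infty^\tau u\neq0$ is the cleanest route I would pursue.
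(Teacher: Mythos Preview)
Your finite-time argument is essentially the paper's. The paper writes the filter error via the conditional law $\Lw(\widetilde{\Delta X}\mid\Delta Y)=\mathcal N_p\big(g,(\EM_p-M^0Z)\Sigma\big)$ with $g=M^0\Delta Y-H_b(M^0\Delta Y)$, which unconditionally is exactly your independent-sum split $e_t=G+R(W)$: your $R(W)$ is the paper's $g$, and your $G$ is the Gaussian residual with covariance $(\EM_p-M^0Z)\Sigma$. Both then invoke Cram\'er--L\'evy; you make the paper's ``$g$ is obviously not normal'' explicit via the atom at $0$, which is a clean touch.

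For the asymptotic part you take a different and more careful route than the paper. The paper dispatches the asymptotic claim in one line by appealing to an asymptotic Cram\'er--L\'evy theorem (cited from the author's thesis) together with the observation that, under the $b_t$-bounds, $g_t$ converges neither to $0$ nor to $M^0\Delta Y$. This is much shorter, but it tacitly reuses the independent-sum decomposition at every $t$, which is only exactly available when $\Delta X_t$ is Gaussian---i.e.\ at the very first rLS step; afterwards the decomposition must be recovered in the limit. You correctly isolate the real difficulty there: in the putative Gaussian limit the conditional mean uses the \emph{correct} gain $\tilde M=\Sigma_\infty Z^\tau(Z\Sigma_\infty Z^\tau+V)^{-1}$, whereas the rLS clips with the classical $M^0$, so the remainder becomes $h(\Delta Y)=\tilde M\Delta Y-H_{b_\infty}(M^0\Delta Y)$ rather than $R(W)$, and the atom at $0$ is lost. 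Your proposed replacement---non-Gaussianity of $h(\Delta Y)$ from the codimension-one crease of $h$ on $\{|M^0y|=b_\infty\}$, via failure of real-analyticity of the pushed-forward density---is exactly what the paper's ``$g$ not normal'' would have to mean in the mismatched-gain regime; in that sense you have made explicit a point the paper leaves implicit. What remains to make your asymptotic sketch a proof is (i) the passage to the limit (tightness and uniform integrability of $|\Delta X_t|^2$ so that covariances and gains converge along the subsequence), and (ii) a clean statement that a nondegenerate Gaussian pushed through a continuous, piecewise-affine, non-affine map cannot be Gaussian; your one-dimensional reduction along a direction $u$ with $F_\infty^\tau u\neq0$ is the standard way to close (ii).
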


Similar assertions can also be proven for particular $\psi$-functions used in the ACM filter
of \cite{Ma:Ma:77} and \cite{Ma:79}.

\citet{Sch:89} and \citet{Sch:Mi:94} use Taylor-expansions for non-normal ${\cal L}(\Delta  X_t)$;
doing so they end up with stochastic error terms but do not give an indication as to uniform integrability.
Hence it is not clear whether the approximation stays valid after integration. More importantly, at time instance $t$,
they come up with a bank of (at least $t$) Kalman--filters which is not operational.

\citet{B:S:93} work with the exact ${\cal L}(\Delta  X_t)$ and hence have to split up the integration according
to the the history of outlier occurrences which yields $2^t$ different terms---which  is not operational
either.

\begin{Rem}\rm\small   \label{RemNonRec}
One of the features of the ideal Gaussian model is that
$\Ew_{\rm\SSs id}[\Delta X_t|Y_{1:t}]$ is Markovian in the sense that
$\Ew_{\rm\SSs id}[\Delta X_t|Y_{1:t}]=\Ew_{\rm\SSs id}[\Delta X_t|\Delta Y_{t}]$
hence only depends on the one value of $\Delta Y_t$.
When using bounded correction
steps, however, this property gets lost, hence
the restriction to strictly recursive procedures as is the rLS filter is a real restriction.
\end{Rem}

Theorem~\ref{ThmSO} does not make any normality assumptions, but in assertion~(3), we have seen that
the rLS would result optimal once we can show that  $\Ew_{\rm\SSs id}[\Delta  X_t|\Delta Y_t]$
for $\Delta X$ stemming from an rLS past is {\em linear}. This leads to the question: 
%
\textbf{When is \boldmath$\Ew_{\rm\SSs id}[\Delta  X|\Delta Y]$ linear?} %
Omitting time indices $t$, the answer is
\begin{Prop}\label{nolin}
Assume $\rk(\EM_p-MZ)=p$, $p=q$ and $\rk Z=p$, and that
\begin{equation} \label{normeps}
{\cal L}_{\rm \SSs id}(\ve) ={\cal N}_q(0,V),\qquad \ve \mbox{ independent of } \Delta X
\end{equation}
Then $\Ew_{\rm\SSs id}[\Delta  X|\Delta Y]$ is linear
\begin{eqnarray}
\hspace{-4em}&\iff&\!\!\!{\cal L}_{\rm \SSs id}(\Delta  X) \quad\mbox{is normal} \label{linnorm}\\
\hspace{-4em}&\iff&\!\!\! M_3(e):=\Ew_{\rm \SSs id}\big[\big(e^\tau (\Delta X - \Ew[\Delta X|\Delta Y])\big)^3\,\big|\,\Delta Y\!=\!y\big]=0 \;\;\;\forall\,e \in \R^p  \label{3mom}
\end{eqnarray}
\end{Prop}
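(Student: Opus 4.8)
The plan is to funnel all three equivalences through a single smooth object -- the cumulant generating function of a suitably tilted law of $\Delta X$ -- and to read off linearity, the vanishing third moment \eqref{3mom}, and normality \eqref{linnorm} from its derivatives. First I would reduce to the rotated error $W:=Z\Delta X$. Since $p=q$ and $\rk Z=p$, the matrix $Z$ is invertible, so by \eqref{normeps} we have $\Delta Y=W+\ve$ with $\ve\sim{\cal N}_q(0,V)$ independent of $W$, and $\Ew_{\rm\SSs id}[\Delta X|\Delta Y]=Z^{-1}\Ew_{\rm\SSs id}[W|\Delta Y]$. As $e\mapsto Z^{-\tau}e$ is a bijection of $\R^p$, linearity of the left-hand conditional mean is equivalent to linearity of $\Ew_{\rm\SSs id}[W|\Delta Y]$, and \eqref{3mom} is equivalent to the vanishing of every directional third central conditional moment of $W$ given $\Delta Y$. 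The hypothesis $\rk(\EM_p-MZ)=p$ enters here only to guarantee that $\Delta X$, hence $W$, is non-degenerate (fills $\R^p$), so that no direction $e$ is spurious and the normal/non-normal dichotomy is genuine.

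Next I would exploit the Gaussian noise. Writing the ${\cal N}_q(0,V)$-density as $\varphi(y-w)\propto\exp(-\tfrac12 w^\tau V^{-1}w+(V^{-1}y)^\tau w)\exp(-\tfrac12 y^\tau V^{-1}y)$ shows that the conditional law of $W$ given $\Delta Y=y$ is a full exponential family
\[
P^{W|\Delta Y=y}(dw)\;\propto\;\exp\big((V^{-1}y)^\tau w\big)\,\tilde G(dw),\qquad \tilde G(dw):=\exp(-\tfrac12 w^\tau V^{-1}w)\,P^{W}(dw),
\]
with natural parameter $\theta=V^{-1}y$ and log-partition $A(\theta):=\log\int \exp(\theta^\tau w)\,\tilde G(dw)$. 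Because $\tilde G$ carries a Gaussian damping factor, $A(\theta)<\infty$ for every $\theta\in\R^p$; hence $A$ is real-analytic and convex on all of $\R^p$, all conditional moments exist, and one checks $A(V^{-1}y)=\tfrac12 y^\tau V^{-1}y+\log h(y)+\text{const}$ for the (everywhere positive) density $h$ of $\Delta Y$. Consequently the conditional cumulants of $W$ are the successive derivatives of $A$ at $\theta=V^{-1}y$: in particular $\Ew_{\rm\SSs id}[W|\Delta Y=y]=\nabla A(V^{-1}y)$ (the multivariate Tweedie/Brown identity), and for $f=Z^{-\tau}e$ the quantity $M_3(e)$ equals the third derivative $\nabla^3 A(V^{-1}y)[f,f,f]$.

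Now the three conditions become statements about $A$, which I would close as follows. Linearity of $\Ew_{\rm\SSs id}[W|\Delta Y]$ in $y$ says $\nabla A$ is affine, i.e. $\nabla^2 A\equiv\text{const}$; condition \eqref{3mom} says, after polarization of the cubic form $f\mapsto\nabla^3 A[f,f,f]$, that $\nabla^3 A\equiv 0$; and on the connected open domain $\R^p$ these are the same statement, namely that $A$ is a quadratic polynomial (analyticity upgrades the ``almost all $y$'' to ``all $y$''). Finally, $A$ quadratic means the bilateral Laplace transform of $\tilde G$ is $\exp(\text{quadratic})$; by uniqueness of the Laplace transform this forces $\tilde G$, and therefore $P^{W}(dw)\propto\exp(\tfrac12 w^\tau V^{-1}w)\,\tilde G(dw)$, to be Gaussian, whence $W$ and $\Delta X=Z^{-1}W$ are normal. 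The reverse implications are immediate: if $\Delta X$ is normal then $(W,\Delta Y)$ is jointly Gaussian, so $\Ew_{\rm\SSs id}[W|\Delta Y]$ is linear and the conditional law is a symmetric Gaussian with vanishing odd central moments.

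The main obstacle is the exponential-family identity of the second step -- equivalently, the higher-order Tweedie/Brown formula expressing the conditional cumulants of $W$ as derivatives of $\log h$. Once it is in place, linearity, \eqref{3mom}, and normality all collapse to ``$A$ is quadratic,'' and the remainder is bookkeeping. A less unified alternative proves \eqref{linnorm} directly -- linearity makes $\nabla\log h$ affine, so $\Delta Y$ is normal, and Cram\'er's decomposition theorem applied to $\Delta Y=W+\ve$ with independent normal $\ve$ yields $W$ normal -- but recovering \eqref{3mom} that way still needs the second-order analogue, so routing everything through $A$ is cleaner. The only points requiring genuine care are verifying finiteness (hence analyticity) of $A$ on all of $\R^p$, and using the rank hypotheses to exclude degeneracy so that the equivalences hold in every direction $e$.
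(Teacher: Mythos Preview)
Your argument is correct and takes a genuinely different route from the paper. The paper handles the two equivalences by separate techniques: for \eqref{linnorm} it passes to Fourier space, rewrites the linearity assumption as a linear ODE for the characteristic function of $\Delta X$ along each ray $t\mapsto Z^\tau t_0\,t$, and solves it explicitly as a Gaussian; for \eqref{3mom} it first proves an auxiliary differentiation identity (its Lemma~\ref{lem1} and Corollary~\ref{cor1}) showing that $\partial^2/\partial y_j\partial y_k\,\Ew[\Delta X_i\,|\,\Delta Y=y]$ coincides with a centered conditional third moment, and then invokes polarization. You instead recognise that the Gaussian noise makes the conditional law of $W=Z\Delta X$ given $\Delta Y=y$ a full natural exponential family with parameter $\theta=V^{-1}y$, and route everything through the log-partition $A$: linearity of the conditional mean, constancy of $\nabla^2A$, vanishing of $\nabla^3A$, and $A$ quadratic are trivially equivalent on the connected domain $\R^p$, the third of these is \eqref{3mom} after polarization, and the last forces the bilateral Laplace transform of $\tilde G$ to be Gaussian, hence $W$ Gaussian, by uniqueness. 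This is more unified and makes it transparent that the paper's differentiation lemma is nothing but the standard cumulant identity $\nabla^kA=\kappa_k$ for exponential families; conversely, the paper's Fourier/ODE argument is more hands-on and yields the explicit relation $Z(\EM_p-MZ)^{-1}MV=Z\Sigma Z^\tau$ as a by-product. One small correction: your stated role for the hypothesis $\rk(\EM_p-MZ)=p$ (``guarantees non-degeneracy of $W$'') does not match its actual use---in the paper it is needed precisely to invert $\EM_p-MZ$ when isolating $\hat q^X$ in the Fourier-space equation, whereas your exponential-family argument never invokes it. If anything this is a strength of your approach, not a gap.
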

\begin{Rem}\rm\small    \begin{ABC}
  \item Assumption $\rk(\EM_p-MZ)=p$ is fulfilled in most situations; otherwise there is a one-dimensional projection
  of the filter error that is $0$ almost sure.
  \item For $Z$ non-invertible, in particular for $p\not=q$, equivalence~\eqref{linnorm} still holds, if we require
\begin{equation} \label{PiNorm}
{\cal L}_{\rm \SSs id}(\Pi \Delta X) ={\cal N}_p(0,\Pi\Sigma\Pi),\qquad  \Pi \Delta X \mbox{ independent of }\bar\Pi\Delta X
\end{equation}
where $\Pi$ is the projector onto $\ker Z$ and $\bar \Pi=\EM_p-\Pi$. In fact we prove Proposition~\ref{nolin}
in this more general case. Assumption~\eqref{PiNorm} is needed, as $\Pi\Delta X$ is invisible for $\Delta Y$.
  \item Equivalence~\eqref{linnorm} together with Proposition~\ref{nonorm}
  shows that, stemming from an rLS-past, rLS can only be SO-optimal in the very first time step.
  \item Simulations however show that rLS gives very reasonable results. So in fact
  we could/should be close to an ideal linear conditional expectation.
  ``Closeness'' to linearity could be quantified by the second derivative 
  $\partial^2/\partial y^2  \Ew_{\rm\SSs id}[\Delta  X|\Delta Y=y]$, which in fact
  leads us to expression~\eqref{3mom}.
  \item Equivalence~\eqref{3mom}, i.e.; conditional unskewedness of $\Delta X$, is somewhat surprising, as it
  seems  much weaker than normality of the prediction error.
  \item Condition~\eqref{normeps} could be relaxed to $\ve~\sim P$, $P$ some infinitely divisible distribution, and
  the normality assumption in \eqref{PiNorm} be dropped.
  Equivalence~\eqref{linnorm} would then become: For each $M\in\R^{p\times q}$ there can be at most one
  distribution $Q=Q(M,P)$ on $\B^p$, such that
  $\Ew[\Delta X|\Delta Y]=M\Delta Y$ for ${\cal L}(\bar \Pi \Delta X)=Q$; for $p=q=1$ and $Z\not=0$,
  there always is such a $Q$; see \citet[Thm.~1.3.1]{Ru:01}.
  \end{ABC}
\end{Rem}
\subsubsection{A test for linearity}
In particle filter context where you simulate many stochastically independent filter realizations in parallel,
Proposition~\ref{nolin} suggests the following test for linearity/normality:

\begin{Prop} \label{testprop}
  Let $\Delta X_i^\natural$, $i=1,\ldots,n$ be an i.i.d.\ sample from ${\cal L}(\Delta X_t)$, the law of
  the prediction errors of some filter at time $t$; let $\Sigma=\Cov(\Delta X_t) $,
  $\sigma^2$ its maximal eigenvalue and $e$ a corresponding eigenvector (of norm $1$);
  let $\hat \Sigma_n$, $\hat \sigma_n^2$, and $\hat e_n$ the corresponding empirical
  counter parts (all assumed consistent). Define the test statistic
$
  T_n= \frac{1}{n} \sum_{i=1}^n (\hat e_n ^\tau \Delta X_i^\natural)^3 
$. 
  Then under normality of ${\cal L}(\Delta X_t)$,
  \begin{equation}
  \sqrt{n}\, T_n \wto {\cal N}(0, 15 \sigma^6) \label{asynorm}
  \end{equation}
  and the test
  \begin{equation}
  \Jc(|T_n| > \sqrt{15/n} \, \hat \sigma_n^3 u_{\alpha/2}) \label{Gausstest}
  \end{equation}
  for $u_{\alpha}$ the upper $\alpha$-quantile of ${\cal N}(0,1)$
  is asymptotically most powerful among all unbiased level-$\alpha$-tests for testing
  \begin{equation}
  \mbox{$H_0$}\colon\quad \sup_{|e|=1} M_3(e)=0 \qquad \mbox{vs.} \qquad \mbox{$H_1$}\colon\quad \sup_{|e|=1} |M_3(e)|>0
  \end{equation}
\end{Prop}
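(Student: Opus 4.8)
The plan is to split the proof into three parts: the limit law \eqref{asynorm}, the resulting asymptotic level of the test \eqref{Gausstest}, and finally the optimality claim. For \eqref{asynorm} I would first treat the idealized statistic $\tilde T_n := \frac1n\sum_{i=1}^n(e^\tau\Delta X_i^\natural)^3$, in which the true top-eigenvector $e$ replaces its estimate. Under $H_0$ the prediction errors are centered Gaussian, so the $\xi_i:=e^\tau\Delta X_i^\natural$ are i.i.d.\ ${\cal N}(0,\sigma^2)$ with $\Ew\xi_i^3=0$ and sixth central moment $\Ew\xi_i^6=5!!\,\sigma^6=15\sigma^6$. Hence $\Var(\xi_i^3)=15\sigma^6$ and the ordinary CLT yields $\sqrt n\,\tilde T_n\wto{\cal N}(0,15\sigma^6)$.

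Next I would pass from $\tilde T_n$ to $T_n$ by a first-order expansion of $g_n(u):=\frac1n\sum_i(u^\tau\Delta X_i^\natural)^3$ in $\hat e_n$ around $e$:
$$T_n=\tilde T_n+3(\hat e_n-e)^\tau\Big[\tfrac1n{\textstyle\sum_i}(e^\tau\Delta X_i^\natural)^2\Delta X_i^\natural\Big]+R_n.$$
The decisive observation is that the population version of the bracket vanishes at the Gaussian null: for centered jointly Gaussian coordinates the expectation of a product of an \emph{odd} number of factors is zero (Wick/Isserlis), so $\Ew[(e^\tau\Delta X)^2\Delta X]=0$ and therefore the whole population gradient $\nabla_u g$ vanishes at the null. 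Consequently the empirical bracket is a centered average of order $O_P(n^{-1/2})$; combined with $\hat e_n-e=O_P(n^{-1/2})$ (which holds because the maximal eigenvalue is assumed simple, so that $e,\hat e_n$ are well-defined and root-$n$ consistent together with $\hat\Sigma_n$), the middle term is $O_P(n^{-1})$ and the quadratic remainder $R_n$ is of the same or smaller order. Thus $\sqrt n\,T_n=\sqrt n\,\tilde T_n+o_P(1)$, which proves \eqref{asynorm}. The asymptotic level of \eqref{Gausstest} then follows by Slutsky from the assumed consistency $\hat\sigma_n^3\to\sigma^3$.

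For the optimality statement I would embed the problem into a local-asymptotic-normality (LAN) framework. Perturbing ${\cal L}(\Delta X_t)$ away from the Gaussian fixed point along a path that introduces directional skewness produces, in the limiting experiment, a one-dimensional Gaussian-shift problem whose natural parameter is $M_3(e)$ in the maximal-variance direction. The vanishing-gradient phenomenon reappears and is again decisive: the score for the skewness functional is asymptotically orthogonal to the eigenvector-estimation error and to the remaining nuisance parameters (the covariance/even-moment parameters and skewness in directions orthogonal to $e$), so estimating $e$, $\sigma^2$ and the nuisance parameters costs nothing asymptotically, and $\sqrt n\,T_n/(\sqrt{15}\,\sigma^3)$ is asymptotically equivalent to the standardized efficient score statistic. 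Invoking the asymptotic theory of uniformly most powerful unbiased tests for a Gaussian shift then identifies the symmetric two-sided rejection region in \eqref{Gausstest} as asymptotically most powerful among unbiased level-$\alpha$ tests.

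The hard part will be this last claim rather than \eqref{asynorm}. A rigorous treatment requires (i) a LAN expansion of the relevant (semiparametric) family around the Gaussian null together with an explicit efficient influence function for the functional $\sup_{|e|=1}M_3(e)$; (ii) a careful handling of the nonsmooth supremum over directions---one must argue that near $H_0$ the binding direction is the simple top-eigenvalue direction and that perturbations in other directions are locally least favorable; and (iii) the verification that plugging in $\hat e_n$ does not degrade efficiency, which is precisely where the orthogonality/vanishing-gradient structure must be exploited a second time. These three points, and in particular the interplay between the composite, nonsmooth null and the LAN optimality machinery, are where the real work lies.
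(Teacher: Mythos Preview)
Your outline matches the paper's proof in spirit: CLT for the idealized statistic with the true eigenvector $e$, then pass to $\hat e_n$, then reduce optimality to the two-sided Gauss test for a normal mean. The paper, however, is far more terse than you are. For \eqref{asynorm} it simply writes ``by the assumed consistency of $\hat e_n$ for $e$, Slutsky's Lemma yields \eqref{asynorm}'', without any Taylor expansion or Wick/Isserlis argument; and for the optimality it states in a single sentence that ``asymptotically, the testing problem is a test for a normal mean $\mu$ to be $0$ or not, which yields the corresponding optimality for the Gauss test''---no LAN machinery, no explicit nuisance-orthogonality discussion.

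Your Taylor/vanishing-gradient step is thus not a different route but a genuine strengthening: it makes precise why the naive Slutsky invocation is actually legitimate (the map $u\mapsto \frac1n\sum_i(u^\tau\Delta X_i^\natural)^3$ is not of the simple product/sum form to which Slutsky applies directly, and your observation that the population gradient $\Ew[(e^\tau\Delta X)^2\Delta X]$ vanishes under the Gaussian null is exactly what is needed). One small over-assumption on your side: you invoke $\hat e_n-e=O_P(n^{-1/2})$ via simplicity of the top eigenvalue, but the statement only assumes consistency, $\hat e_n-e=o_P(1)$. Your own vanishing-gradient argument already shows this weaker assumption suffices, since $o_P(1)\cdot O_P(n^{-1/2})=o_P(n^{-1/2})$ for the linear term and the quadratic remainder is $o_P(1)\cdot O_P(1)\cdot o_P(1)$ after another expansion. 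Your LAN sketch for the optimality part goes well beyond what the paper supplies; the paper is content with the heuristic reduction to a Gaussian-shift problem and does not address the semiparametric and non-smooth-supremum issues you flag.
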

\subsection{Way out: eSO-Neighborhoods} \label{wayoutsec}
One explanation for the good empirical findings for the rLS is given by a further
 extension of  the original SO-neighborhoods---the {\it e}xtended \textit{SO} or \textit{${\rm eSO}$--model\/}:
In this model, we  also allow for model deviations in $X$, i.e.; we assume a
realistic $(X^{\rm\SSs re},Y^{\rm\SSs re})$ according to
\begin{equation}
(X^{\rm\SSs re},Y^{\rm\SSs re}):=(1-U)(X^{\rm\SSs id},Y^{\rm\SSs id})+U(X^{\rm\SSs di},Y^{\rm\SSs di}) \label{eso1}
\end{equation}
for $X^{\rm\SSs id}\sim P^{X^{\rm\SSs id}}$, $Y^{\rm\SSs id}$ according to equation~\eqref{decM}, $X^{\rm\SSs di}\sim P^{X^{\rm\SSs di}}$,
$Y^{\rm\SSs di}\sim P^{Y^{\rm\SSs di}}$,
$U\sim{\rm Bin}(1,r_{\rm\SSs eSO})$,  where
\begin{equation} \label{indep3}
 U \mbox{ and }(X^{\rm\SSs id},Y^{\rm\SSs id})\;\mbox{  independent as well as (mutually) }  U, X^{\rm\SSs di},Y^{\rm\SSs di}
\end{equation}
and the joint law $P^{X^{\rm\SSs id},Y^{\rm\SSs id}}$ and the radius $r=r_{\rm\SSs eSO}$ are known, while $P^{X^{\rm\SSs di}}, P^{Y^{\rm\SSs di}}$ are arbitrary, unknown
  and uncontrollable; however, we assume that
\begin{equation}\label{esogl}
\Ew_{\rm\SSs di} X^{\rm\SSs di} =\Ew_{\rm\SSs id} X^{\rm\SSs id}, \qquad \Ew_{\rm\SSs di} |X^{\rm\SSs di}|^2 \leq G
\end{equation}
for some known $0<\Ew_{\rm\SSs id} |X^{\rm\SSs id}|^2 \leq G<\infty$, and accordingly define
\begin{equation}
\!\!\!\! {\cal U}^{{\rm\SSs eSO}}(r):=\!\!\!\! \bigcup_{0\leq s \le r}\!\! \partial{\cal U}^{\rm\SSs eSO}(s),\quad
\partial{\cal U}^{\rm\SSs eSO}(r) :=  \{\; {\cal L}(X^{\rm\SSs re},Y^{\rm\SSs re})\;\mbox{acc. to \eqref{eso1}--\eqref{esogl}}\; \}
\end{equation}
\begin{Rem}\rm\small    At first glance, moment condition~\eqref{esogl} seems to violate (distributional) robustness; however, this condition
  has not been introduced to induce a higher degree of robustness, but rather to extend the applicability of Theorem~\ref{ThmSO}.
\end{Rem}
\begin{Thm}[minimax-eSO]\label{ThmeSO}
The pair $(f_0, P_0^{Y^{\rm\SSs di}})$, optimal in the Minimax-SO-problem to radius $r_{\rm\SSs SO}=r$
from Theorem~\ref{ThmSO}, extended to $\big(f_0, P_0^{Y^{\rm\SSs di}} \otimes P_0^{X^{\rm\SSs di}}\big)$ for any
$P_0^{X^{\rm\SSs di}}$ such that  $\Ew_{\rm\SSs di} |X^{\rm\SSs di}|^2 = G$, remains a saddle-point in the corresponding Minimax-Problem on the ${\rm eSO}$-neighborhood
${\cal U}^{\rm\SSs eSO}$ to the
same radius $r$---no matter what bound $G$ in equation~\eqref{esogl} holds. The value of the minimax risk is
\begin{equation}
\tr \Cov_{\rm\SSs id} X^{\rm\SSs id} + r (G-\Ew_{\rm\SSs id} |X^{\rm\SSs id}|^2) -(1-r)\Ew_{\rm\SSs id}\big[\,|D(Y^{\rm\SSs id})|^2 w_r(Y^{\rm\SSs id})\big]
\label{sadvaleSO}
\end{equation}
\end{Thm}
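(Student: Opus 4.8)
The plan is to verify directly the two saddle-point inequalities for $\min_f\max_{Q}{\rm MSE}_Q(f)$ on ${\cal U}^{\rm\SSs eSO}(r)$, in each case reducing the computation to Theorem~\ref{ThmSO}. First I would record the basic mixture decomposition: since by \eqref{indep3} the variable $U\sim{\rm Bin}(1,r_{\rm\SSs eSO})$ is independent of both $(X^{\rm\SSs id},Y^{\rm\SSs id})$ and $(X^{\rm\SSs di},Y^{\rm\SSs di})$, every $Q\in{\cal U}^{\rm\SSs eSO}(r)$ carries a mixing weight $s\le r$ and splits the risk as
\[
{\rm MSE}_Q(f)=(1-s)\,\Ew_{\rm\SSs id}|X^{\rm\SSs id}-f(Y^{\rm\SSs id})|^2 + s\,\Ew_{\rm\SSs di}|X^{\rm\SSs di}-f(Y^{\rm\SSs di})|^2 .
\]
Because \eqref{indep3} also makes $X^{\rm\SSs di}$ and $Y^{\rm\SSs di}$ independent, the distorting term expands into $\Ew_{\rm\SSs di}|X^{\rm\SSs di}|^2-2(\Ew_{\rm\SSs di}X^{\rm\SSs di})^\tau\Ew_{\rm\SSs di}f(Y^{\rm\SSs di})+\Ew_{\rm\SSs di}|f(Y^{\rm\SSs di})|^2$, and the first-moment constraint $\Ew_{\rm\SSs di}X^{\rm\SSs di}=\Ew_{\rm\SSs id}X^{\rm\SSs id}=\Ew X$ from \eqref{esogl} ties its $f$-dependence to this fixed mean only.

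For the minimax inequality (that $f_0$ is best against $P_0=P_0^{Y^{\rm\SSs di}}\otimes P_0^{X^{\rm\SSs di}}$) I would specialise the expansion to $Q=P_0$, where $s=r$ and $\Ew_{\rm\SSs di}|X^{\rm\SSs di}|^2=G$. Comparing with the SO-risk ${\rm MSE}^{\rm\SSs SO}_{P_0^{Y^{\rm\SSs di}}}(f)$ of Theorem~\ref{ThmSO}, in which $X^{\rm\SSs di}$ is replaced by $X^{\rm\SSs id}$ while $Y^{\rm\SSs di}$ carries the same law, all $f$-dependent pieces of the distorting term coincide, so
\[
{\rm MSE}_{P_0}(f)={\rm MSE}^{\rm\SSs SO}_{P_0^{Y^{\rm\SSs di}}}(f)+r\,(G-\Ew_{\rm\SSs id}|X^{\rm\SSs id}|^2).
\]
The shift is a constant independent of $f$, hence the minimiser is unchanged; by Theorem~\ref{ThmSO}(1) it is $f_0$, and adding the shift to the SO-value \eqref{sadvalSO} reproduces exactly \eqref{sadvaleSO}. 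This step also shows the value is insensitive to the particular choice of $P_0^{X^{\rm\SSs di}}$, provided its second moment equals $G$.

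For the maximin inequality (that $P_0$ is worst against $f_0$) I would exploit that $f_0$ is bounded: writing $f_0(y)=\Ew X+g(y)$ with $g(y)=D(y)w_r(D(y))$ and $|g|\le\rho$. Since $X^{\rm\SSs di}\perp Y^{\rm\SSs di}$ and $\Ew_{\rm\SSs di}(X^{\rm\SSs di}-\Ew X)=0$, the cross term vanishes and the distorting contribution collapses to $\Ew_{\rm\SSs di}|X^{\rm\SSs di}-\Ew X|^2+\Ew_{\rm\SSs di}|g(Y^{\rm\SSs di})|^2\le(G-|\Ew X|^2)+\rho^2$, a bound uniform over the whole neighbourhood. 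With the ideal term a fixed constant $A$, the risk is the affine expression $A+s(B-A)$ in the weight $s$ and the distorting contribution $B$; one checks $(G-|\Ew X|^2)+\rho^2\ge A$, so the maximum is attained at $s=r$ together with the extremal distortion. Finally I would check that $P_0$ attains the bound: $P_0^{X^{\rm\SSs di}}$ realises $\Ew|X^{\rm\SSs di}|^2=G$, while $P_0^{Y^{\rm\SSs di}}$ from \eqref{P0def} is supported on $\{|D|>\rho\}$, where $|g|\equiv\rho$, so $\Ew_{P_0}|g(Y^{\rm\SSs di})|^2=\rho^2$; hence ${\rm MSE}_{P_0}(f_0)$ equals the uniform bound and the saddle point is established.

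The main obstacle is the decoupling of the $X$-contamination from the $Y$-contamination. It is precisely the combination of the independence \eqref{indep3} with the first-moment constraint in \eqref{esogl} that annihilates the cross term $\Ew_{\rm\SSs di}[(X^{\rm\SSs di}-\Ew X)^\tau g(Y^{\rm\SSs di})]$ in \emph{both} inequalities, so that the added freedom in $X^{\rm\SSs di}$ enters only through its trace-variance and contributes the single additive summand $r\,(G-\Ew_{\rm\SSs id}|X^{\rm\SSs id}|^2)$, never interfering with the SO-optimality of $f_0$. Once this decoupling is secured, the affine-in-$s$ monotonicity and the attainment of the bound by $P_0$ are routine.
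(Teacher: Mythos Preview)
Your proof is correct and rests on the same key observation as the paper: the first-moment constraint in \eqref{esogl} makes the $X$-contamination contribute only an additive, $f$-independent term, so the SO saddle-point carries over unchanged. The organisational difference is minor. The paper re-derives $\Ew_{\rm\SSs re}[X|Y^{\rm\SSs re}]$ in the eSO model, observes that because $\Ew_{\rm\SSs di}X^{\rm\SSs di}=\Ew X$ it coincides with the SO expression, and then re-enters the proof of Theorem~\ref{ThmSO} with $\tr\Cov X$ replaced by $\tilde G=G-|\Ew X|^2$; you instead compare the eSO and SO risks directly and read off the constant shift $r(G-\Ew_{\rm\SSs id}|X^{\rm\SSs id}|^2)$, which is a little more transparent and avoids recomputing the Bayes estimator. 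The max-side and the passage from $\partial{\cal U}^{\rm\SSs eSO}$ to ${\cal U}^{\rm\SSs eSO}$ are handled the same way in both.

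One place to add a line: your ``one checks $(G-|\Ew X|^2)+\rho^2\ge A$'' deserves the verification, since the affine-in-$s$ monotonicity hinges on it. From \eqref{esogl}, $G\ge\Ew_{\rm\SSs id}|X^{\rm\SSs id}|^2$ gives $G-|\Ew X|^2\ge\tr\Cov X$, while conditioning on $Y^{\rm\SSs id}$ yields $A=\tr\Cov X-\Ew_{\rm\SSs id}\big[|D(Y^{\rm\SSs id})|^2\bar w_r\big]\le\tr\Cov X$ with $\bar w_r=2w_r-w_r^2\ge0$; hence $B_{\max}\ge\tr\Cov X+\rho^2>A$.
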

As an application of Theorem~\ref{ThmeSO}, we now invoke a coupling idea:
In the Gaussian setup, i.e.; we assume \eqref{normStart}, we no longer regard
the (SO--) saddle-point solution to an ${\cal U}(r)$-neighborhood
around ${\cal L}(\Delta  X)$ stemming from an rLS-past, but use
Theorem~\ref{ThmeSO}  as follows:
\begin{Prop}\label{esoprop}
Assume that for each time $t$ there is a
{\em (fictive)} random variable $\Delta  X^{\cal N}\sim{\cal N}_p(0,\Sigma)$
such that  $\Delta  X_t^{\rm\SSs rLS}$ stemming from an rLS-past can be considered an $X^{\rm\SSs di}$ in
the corresponding ${\rm  eSO}$-neighborhood around $\Delta  X^{\cal N}$
with  radius $r$.
Then, rLS is exactly minimax for each time $t$.
\end{Prop}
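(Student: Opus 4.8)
The plan is to reduce the claim to a chained application of assertion~(3) of Theorem~\ref{ThmSO} and of Theorem~\ref{ThmeSO}, the key point being that the \emph{fictive} center is genuinely Gaussian even though $\Delta X^{\rm\SSs rLS}_t$ is not. Fix $t$ and form the fictive ideal pair $(\Delta X^{\cal N},\Delta Y^{\cal N})$ with $\Delta X^{\cal N}\sim{\cal N}_p(0,\Sigma)$, the natural choice being $\Sigma=\Sigma_{t|t-1}$, and $\Delta Y^{\cal N}=Z_t\Delta X^{\cal N}+\ve_t$ with $\ve_t\sim{\cal N}_q(0,V_t)$ independent of $\Delta X^{\cal N}$. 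Since $(\Delta X^{\cal N},\Delta Y^{\cal N})$ is then jointly Gaussian, $\Ew_{\rm\SSs id}[\Delta X^{\cal N}\,|\,\Delta Y^{\cal N}]$ is linear, so assertion~(3) of Theorem~\ref{ThmSO} forces its slope to be the Kalman gain $M^0_t=\Cov(\Delta X^{\cal N},\Delta Y^{\cal N})\Var(\Delta Y^{\cal N})^{-}$. Hence the SO-saddle-point reconstruction $f_0$ of \eqref{f0def} on ${\cal U}^{\rm\SSs SO}(r)$ around this fictive law is precisely the one-step {\tt rLS} with the clipping height $b_t$ belonging to $\Sigma_{t|t-1}$.

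I would then pass to the eSO-neighborhood around the \emph{same} fictive normal. Because the fictive ideal conditional mean is linear, Theorem~\ref{ThmeSO} applies unchanged: $(f_0,P_0^{Y^{\rm\SSs di}}\otimes P_0^{X^{\rm\SSs di}})$ stays a saddle-point on ${\cal U}^{\rm\SSs eSO}(r)$ for \emph{every} admissible second-moment bound $G$ and every distorting law attaining $\Ew_{\rm\SSs di}|X^{\rm\SSs di}|^2=G$, with minimax value \eqref{sadvaleSO}. The decisive feature to emphasize is the $G$-independence of the optimal filter in Theorem~\ref{ThmeSO}: the reconstruction does not react to the shape of the contaminating part of $X$, only the reported risk does, through the additive term $r(G-\Ew_{\rm\SSs id}|X^{\rm\SSs id}|^2)$.

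Finally I would close the loop with the hypothesis. By assumption $\Delta X^{\rm\SSs rLS}_t$ qualifies as an admissible $X^{\rm\SSs di}$ for ${\cal U}^{\rm\SSs eSO}(r)$ around $\Delta X^{\cal N}$: it is centered like $\Delta X^{\cal N}$ (which follows from the oddness of the correction $H_{b_t}$ together with the symmetry of the ideal innovations) and has finite second moment, which I would take as the defining $G:=\Ew_{\rm\SSs id}|\Delta X^{\rm\SSs rLS}_t|^2$ in \eqref{esogl}. Thus the law governing the actual filter step at time $t$ lies in ${\cal U}^{\rm\SSs eSO}(r)$, and the saddle-point identity $\max_{{\cal U}^{\rm\SSs eSO}(r)}{\rm MSE}(f_0)=\min_f\max_{{\cal U}^{\rm\SSs eSO}(r)}{\rm MSE}(f)$ shows that $f_0={}$ {\tt rLS} minimizes the maximal MSE over exactly the neighborhood containing the true situation; this is the asserted exact minimaxity at time $t$.

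The main obstacle is conceptual rather than computational: it is to justify demoting the genuine non-normal law of $\Delta X^{\rm\SSs rLS}_t$ from the role of ideal \emph{center} to that of worst-case \emph{contamination} of a fictitious Gaussian center without altering the optimal procedure. This rests entirely on the $G$-independence supplied by Theorem~\ref{ThmeSO}, together with the compatibility of the chosen $\Sigma=\Sigma_{t|t-1}$ with the Kalman gain and clipping height that {\tt rLS} actually uses at step $t$, and with the independence structure~\eqref{indep3}, which the adversary's freedom in choosing $Y^{\rm\SSs di}$ accommodates; checking that the embedding hypothesis is consistent with these recursion quantities is the delicate part that the stated assumption is designed to absorb.
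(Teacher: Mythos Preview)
Your proposal is correct and follows essentially the same route as the paper: identify the fictive Gaussian pair $(\Delta X^{\cal N},\Delta Y^{\cal N})$ as the ideal center, use the linearity of the Gaussian conditional mean together with Theorem~\ref{ThmSO}(3) and Theorem~\ref{ThmeSO} to recognize the eSO-saddle-point filter as the one-step {\tt rLS} with gain $M^0=\Sigma Z^\tau(Z\Sigma Z^\tau+V)^{-}$, and then invoke the hypothesis that $\Delta X_t^{\rm\SSs rLS}$ sits in ${\cal U}^{\rm\SSs eSO}(r)$ to transfer the saddle-point bound to the actual rLS error. Your emphasis on the $G$-independence of $f_0$ in Theorem~\ref{ThmeSO} and on the symmetry argument for $\Ew\Delta X_t^{\rm\SSs rLS}=0$ is exactly the content the paper places in the remark following the proposition.
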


\begin{Rem}\rm\small    \begin{ABC}
    \item Existence of $\Delta  X^{\cal N}\sim{\cal N}_p(0,\Sigma)$ in a general setting is not yet proved.
    To this end one has to show moment condition~\eqref{esogl} and that
    \begin{equation} \label{esocond}
    \sup\nolimits_{\lambda} \big(p^{\Delta X_t^{\cal N}}\,/\, p^{\Delta  X_t}\big) \ge 1-r
    \end{equation}
    where $p^{\Delta X_t^{\cal N}}$, $p^{\Delta X_t}$ are the corresponding Lebesgue densities and  $\sup\nolimits_{\lambda}$ is the corresponding
    essential supremum w.r.t.\ Lebesgue measure in the respective dimension. Clearly condition~\eqref{esocond} is
    the difficulty, while condition~\eqref{esogl} is not hard to fulfill---we only need to check that $\Ew_{\rm\SSs id} \Delta X_t=0$,
    which for the rLS follows from symmetry of the distributions in the ideal model, and that
    the second moment is bounded---which also clearly holds.
    \item As to the choice of covariance~$\Sigma$ for $\Delta X_t^{\cal N}$, we have two candidates:
    $\Sigma = \Cov \Delta X_t^{\rm\SSs rLS}$ and $\Sigma = \Sigma_{t|t-1}$ from the classical Kalman filter.
    While the former takes up the actual error covariances, the latter is much easier to compute. In
    our numerical examples in \citet{Ru:01}, we could not find any significant advantages for the former
    in terms of precision and hence propose the latter for computational reasons.
    \item For $p=1$, \eqref{esocond} could be checked numerically in a number of models,
    cf.\  \citet[Table~8.1]{Ru:01}.
    For  $p>1$, particle filter techniques should be helpful.
  \end{ABC}
\end{Rem}
%
\section{IO-optimality} \label{IO'sec}
In this section, we translate the preceding optimality results to the IO situation.
We have already noted that in this case, instead of attenuating (the influence of)
a dubious observation we would rather want to follow an IO outlier as fast as
possible. It is well-known that the Kalman filter tends to be too inert for this
task and faster tracking filters are needed. To do so, let us go  back to our ``Bayesian'' model~\eqref{decM}
but now we specify the transition densities $\pi(y,x)$
to come from an observation $Y$ which is built up additively as
\begin{equation} \label{simpAdd}
Y=X+\ve
\end{equation}
Equation~\eqref{simpAdd} reveals a remarkable symmetry of $X$ and $\ve$ which
we are going to exploit now: Apparently
\begin{equation} \label{simpEx}
\Ew[X|Y] = Y-\Ew[\ve|Y]
\end{equation}
This is helpful if we are now assuming that $\ve$ will be ideally distributed,
and instead the states $X$ get corrupted. To this end, we retain the SO-model
from the preceding sections, i.e., $Y^{\rm\SSs id}$ will be replaced from time
to time by $Y^{\rm\SSs di}$. Contrary to the AO formulation however, we now assume that
this replacement by $Y^{\rm\SSs di}$ reflects a corresponding change in $X$, as
we now want to track the distorted signal.
As a consequence this gives the following IO-version of the minimax problem (where the only visible
difference is the superscript ``${\rm\Ts re}$'' for $X$).
\begin{align}
\quad&\max\nolimits_{{\cal U}}\, \Ew_{\SSs\rm re} |X^{\rm\SSs re}-f(Y^{\rm\SSs re})|^2 = \min\nolimits_f{}! \label{minmaxIOSO}
\end{align}
But, using $X^{\rm\SSs re}=Y^{\rm\SSs re}-\ve$, and setting $\tilde f(y)=y-f(y)$ we obtain the equivalent
formulation
\begin{align}
\quad&\max\nolimits_{{\cal U}}\, \Ew_{\SSs\rm re} |\ve-\tilde f(Y^{\rm\SSs re})|^2 = \min\nolimits_{\tilde f}{}! \label{minmaxIOSO2}
\end{align}
and we are back in the situation of subsection~\eqref{1stpOpt} with the respective r\^oles of $X$ and $\ve$ interchanged.
That is; the corresponding theorems translate word by word. Skipping the Lemma~5 solution we obtain

\begin{Thm}[Minimax-IO]\label{ThmISO}
\begin{enumerate}
\item[(1)']
In this situation, there is a \emph{saddle-point\/} $(f_1, P_1^{Y^{\rm\SSs di}})$ for Problem~\eqref{minmaxIOSO}
\begin{eqnarray}
f_1(y)&:=&y- \tilde D(y) \min\{1, \tilde \rho/\big|\tilde D(y)\big|\}\\
P_1^{Y^{\rm\SSs di}}(dy)&:=&\Tfrac{1-r}{r} ( \big|\tilde D(y)\big|\, \big/ \tilde \rho\,-1)_{\SSs +}\,\, P^{Y^{\rm\SSs id}}(dy)
\end{eqnarray}
where $\tilde \rho>0$ ensures that $\int \,P_1^{Y^{\rm\SSs di}}(dy)=1$ and
\begin{equation} \label{Dy2}
\tilde D(y)=y-\Ew_{\SSs\rm id}[X|Y=y]
\end{equation}
\item[(3)']
If $\Ew_{\SSs\rm id}[X|Y]$ is linear in $Y$, i.e.; $\Ew_{\SSs\rm id}[X|Y]=MY$ for some matrix $M$, then necessarily
\begin{equation}
M=M^0=\Cov(X,Y)\Var Y^{-}
\end{equation}
---or in the SSM formulation: $M^0$ is just the classical Kalman gain  and $f_1$ the (one-step) rLS.IO defined below.
\end{enumerate}
\end{Thm}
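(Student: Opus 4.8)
The plan is to reduce Theorem~\ref{ThmISO} to Theorem~\ref{ThmSO} by exploiting the additive symmetry \eqref{simpAdd}--\eqref{simpEx}, so that essentially nothing new needs to be proven; the work is entirely in setting up the correct substitution and verifying that the neighborhood structure is preserved under it.

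First I would make the change of target variable explicit. In the additive model $Y=X+\ve$ we have the exact identity $X^{\SSs\rm re}=Y^{\SSs\rm re}-\ve$, where $\ve$ is assumed ideally distributed (this is the defining feature of the IO setting: the contamination sits in $X$, not in $\ve$). Substituting this into the IO-MSE objective \eqref{minmaxIOSO} and setting $\tilde f(y)=y-f(y)$ gives the equivalent problem \eqref{minmaxIOSO2}, namely minimizing $\max_{\cal U}\Ew_{\SSs\rm re}|\ve-\tilde f(Y^{\SSs\rm re})|^2$ over $\tilde f$. This is \emph{formally identical} to the Minimax-SO problem \eqref{minmaxSO} with the r\^oles of $X$ and $\ve$ interchanged: $\ve$ now plays the part of the quantity to be reconstructed, and $Y^{\SSs\rm re}$ is still the observation. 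I would stress that the bijection $f\mapsto\tilde f=\Id-f$ is an affine isomorphism of the space of measurable reconstructions, so a minimizer of one problem corresponds to a minimizer of the other, and a saddle-point transports to a saddle-point.

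Next I would invoke Theorem~\ref{ThmSO}(1) \emph{verbatim}, reading it with $\ve$ in place of $X$. The optimal $\tilde f_0$ is then
\begin{equation}
\tilde f_0(y)=\Ew\ve + \tilde D(y)\,w_{\tilde\rho}(\tilde D(y)),\qquad \tilde D(y)=\Ew_{\SSs\rm id}[\ve|Y=y]-\Ew\ve,\nonumber
\end{equation}
with the corresponding least favorable $P_1^{Y^{\SSs\rm di}}$ given by the transported formula \eqref{P0def}. Translating back through $f_1=\Id-\tilde f_0$ and using $\Ew_{\SSs\rm id}[\ve|Y=y]=y-\Ew_{\SSs\rm id}[X|Y=y]$ (which is \eqref{simpEx} conditionally) recovers exactly the claimed $f_1(y)=y-\tilde D(y)\min\{1,\tilde\rho/|\tilde D(y)|\}$ with $\tilde D(y)=y-\Ew_{\SSs\rm id}[X|Y=y]$ as in \eqref{Dy2}, once one checks that the centering constants $\Ew\ve$ cancel against the $y$ term. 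The least favorable $P_1^{Y^{\SSs\rm di}}$ is then immediate. For assertion (3)$'$, linearity of $\Ew_{\SSs\rm id}[X|Y]$ is equivalent to linearity of $\Ew_{\SSs\rm id}[\ve|Y]$ (they differ by the identity $y$), so Theorem~\ref{ThmSO}(3) applies directly and forces $M=M^0=\Cov(X,Y)\Var Y^{-}$, yielding the Kalman gain and identifying $f_1$ as the one-step rLS.IO.

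The \textbf{main obstacle}, and the one point requiring genuine care rather than mechanical translation, is verifying that the SO-neighborhood is mapped onto itself under the reinterpretation. In the SO problem the contamination replaces $Y^{\SSs\rm id}$ by an arbitrary a.u.u.\ $Y^{\SSs\rm di}$ independent of $X$ (condition~\eqref{indep2}); in the IO problem the replacement of $Y^{\SSs\rm di}$ simultaneously carries a change in $X^{\SSs\rm re}$, while $\ve$ stays ideal. I would argue that, after the substitution $X^{\SSs\rm re}=Y^{\SSs\rm re}-\ve$, the effective observation law ranging over $\partial{\cal U}^{\SSs\rm SO}(r)$ is precisely the same a.u.u.\ family of $Y^{\SSs\rm di}$, now required independent of $\ve$ rather than of $X$; since $\ve$ plays the r\^ole of the reconstruction target, this is exactly condition~\eqref{indep2} in the interchanged labelling. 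Hence the admissible contamination classes coincide, the least favorable $P_1^{Y^{\SSs\rm di}}$ is admissible, and the saddle-point property—both the maximality in the contaminating distribution and the minimality in the reconstruction—carries over unchanged. Once this identification of neighborhoods is secured, the theorem follows from Theorem~\ref{ThmSO} with no further computation.
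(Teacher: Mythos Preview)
Your proposal is correct and takes essentially the same approach as the paper, which does not even give a separate proof but simply observes (in the text preceding the theorem) that the substitution $\tilde f=\Id-f$ reduces \eqref{minmaxIOSO} to \eqref{minmaxIOSO2}, whereupon Theorem~\ref{ThmSO} applies with the r\^oles of $X$ and $\ve$ interchanged. One small clarification: the centering constant does not ``cancel against the $y$ term''---rather, as the paper notes immediately after the statement, $\Ew\ve=0$ in this setting, so the transported $D$-function $\Ew_{\SSs\rm id}[\ve|Y=y]-\Ew\ve$ equals $\Ew_{\SSs\rm id}[\ve|Y=y]=y-\Ew_{\SSs\rm id}[X|Y=y]=\tilde D(y)$ directly.
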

 Note that contrary to Theorem~\ref{ThmSO} where $\Ew X$ need not be $0$, here
$\Ew \ve = 0$, which simplifies the definition of $\tilde D$ in \eqref{Dy2}.
Details on how to use this for a corresponding IO-robust variant of rLS are given in \citet{Ru:10}.
\section{Conclusion and Outlook} \label{conclusion}
In the extremely flexible class of dynamic models consisting in SSMs we
were able to obtain optimality results for filtering. In this generality this is a novelty.
We stress the fact that our filters are non-iterative, recursive,
hence fast, and valid for higher dimensions. 

So far, we have not said much about the implementation of these filters.
rLS.AO was originally implemented to {\tt XploRe}, compare \citet{Ru:00}.
In an ongoing project with Bernhard Spangl, BOKU, Vienna,
and Irina~Ursachi (ITWM), we are about to implement the rLS filter to {\sf R},
(\citet{R09}), more specifically to an {\sf R}-package {\tt robKalman}, the
development of which is done under {\tt r-forge} project
\hreft{https://r-forge.r-project.org/projects/robkalman/},
(\citet{Rforge}). Under this address you will also find a preliminary version
available for download.

In an extra paper, which for the moment is available as technical report, \citet{Ru:10},
we also check the properties of our filters at simulations and discuss
the extension of these optimally-robust filters to a filter that combines the two types
(for system-endogenous and -exogenous outlier situation). This hybrid filter is capable to
  treat (wide-sense) IO's and AO's simultaneously---albeit with minor delay.

\section{Proofs} \label{proofs}
\paragraph{Proof to Lemma~\ref{Lem223}}
We use the fact that for $0\leq a,b,c,d$, $
(a+b)/(c+d) \leq \max(a/c,b/d)$. Hence
  \begin{equation} \label{Lem223a}
  \rho_0(s)\le \max\{A_s/A_{r_l},B_s/B_{r_u}\}
  \end{equation}
Equation~\eqref{deltakrit2} shows that $b(r)$ is (strictly) decreasing in $r$ (for $r>0$) from $\infty$ to $0$.
Hence $A_r$ is increasing in $r$, and $B_r$ decreasing, $B_r$ from $\infty$ to $0$.
By dominated convergence $b(r)$, and hence $A_r$ and $B_r$ are continuous in $r$.
Thus  existence of $\bar r_0$ follows. For $r_u=1$, one argues letting $r_n\in[0,1)$ tend to $1$.
To show equality in \eqref{Lem223a}, we parallel \citet[Lemma~2.2.3]{Ko05}, and first show that for $r\ge s$, $s$ fixed, $\rho(r,s)$ is increasing and
correspondingly, for $r\le s$, $s$ fixed, decreasing, which entails \eqref{Lem223c}: Let $0\leq s<r_1<r_2\leq 1$. Then by monotony of $A_r$, $B_r$,
$(A_sB_s^{-1}+r_1)^{-1} \ge (A_{r_1}B_{r_1}^{-1}+r_1)^{-1}$; multiplying this inequality with $(r_2-r_1)$, we get
$
 (r_2-r_1)B_s(A_s+ r_1 B_s)^{-1} \ge (r_2-r_1)B_{r_1}(A_{r_1}+r_1B_{r_1})^{-1}
$. 
Now, due to optimality of   $A_{r}+rB_{r}$ for radius $r$,
\begin{eqnarray*}
0&\leq&\frac{(r_2-r_1)B_s}{A_s+ r_1 B_s} - \frac{(r_2-r_1)B_{r_1} + A_{r_2}+r_2B_{r_2} -A_{r_1}-r_2B_{r_1}}{A_{r_1}+r_1B_{r_1}} =\\
&=&(r_2-r_1)B_s(A_s+ r_1 B_s)^{-1} - \big(A_{r_2}+r_2B_{r_2}\big)(A_{r_1}+r_1B_{r_1})^{-1} +1\nonumber
\end{eqnarray*}
Multiplying with $(A_{s}+r_1B_{s})/(A_{r_2}+r_2B_{r_2})$, we obtain indeed
$\rho(r_2,s) \ge \rho(r_1,s)$, and similarly for $0\ge s>r_1>r_2\ge 1$. Next, for $\tilde r_0$ least favorable,
we show that for $r$ fixed, and $s\ge r$, $\rho(r,s)$ is increasing and
 correspondingly, for $s\le r$, decreasing: Let $0\le r<r_1<r_2\le 1$. Then, due to optimality of $A_{r_1}-r_1 B_{r_1}$,
\begin{eqnarray*}
&&A_{r_2}+rB_{r_2}-A_{r_1}+rB_{r_1}=\\
&&\quad=(r_1-r)(B_{r_1}-B_{r_2})+A_{r_2}+r_1 B_{r_2} -A_{r_1}-r_1 B_{r_1}\ge 0
\end{eqnarray*}
and similarly for $0\ge r>r_1>r_2\ge 1$.
For the last assertion, note that by  \eqref{deltakrit2}, $b(1)=0$, hence $B_1=0$.
Hence $\max\big\{A_s/A_{r_l},B_s/B_{1}\big\}=\infty$ for  $s<1$, while for $s=1$, we get
 $\rho_0(1)=\max\{A_1/A_{r_l},1\}=1$. \hfill\qed
%
%
\paragraph{Proof to Theorem~\ref{ThmSO}}
(1) Let us solve $\max_{\partial{\cal U}}\min_f{} [\ldots]$ first, which amounts to $\min_{\partial{\cal U}} \Ew_{\SSs \rm re}[\big|\Ew_{\SSs \rm re}[X|Y^{\rm\SSs re}]\big|^2]$.
For fixed element $P^{Y^{\rm\SSs di}}$ assume w.l.o.g.\ that $\mu\gg P^{Y^{\rm\SSs di}}$
for $\mu$ from \eqref{decM}---otherwise we replace $\mu$ by $\mu + P^{Y^{\rm\SSs di}}$; this gives us a $\mu$-density
$q(y)$ of $P^{Y^{\rm\SSs di}}$.
Determining the joint (real) law $P^{X,Y^{\rm \SSs re}}(dx,dy)$ as
\begin{equation} \label{SOdistr}
P(X \!\in\! A, Y^{\rm \SSs re} \!\in\! B) =\!\! \int \!\! \Jc_A(x)\Jc_B(y) [(1\!-\!r) \pi(y,x) +r q(y)]\, P^X(dx)\,\mu(dy)
\end{equation}
we deduce that $\mu(dy)$-a.e.
\begin{equation} \label{SOEw}
\Ew_{\rm\SSs re}[X|Y^{\rm\SSs re}\!\!=\!y]=\frac{r q(y)\!\Ew X \!+\!(1\!-\!r)p^{Y^{\rm\SSs id}}(y)\Ew_{\SSs \rm id}[X|Y]}%
{r q(y)+(1-r)p^{Y^{\rm\SSs id}}(y)}\!=:\!\frac{a_1 q(y)\!+\! a_2(y)}{a_3q(y)\!+\!a_4(y)}
\end{equation}
Hence we have to minimize
\begin{equation}
F(q):= \int \frac{|a_1 q(y)+ a_2(y)|^2}{a_3q(y)+a_4(y)}\,\,\mu(dy)\nonumber
\end{equation}
 in $M_0=\{q\in L_1(\mu)\,|\; q\geq 0,\; \int q\,d\mu=1\}$.
To this end, we note that $F$ is convex on the non-void, convex cone $M=\{q  \in L_1(\mu)\,|\; q\geq 0\}$ 
so, for some $\tilde \rho\ge 0$, we may consider the Lagrangian
\begin{equation}
L_{ \tilde \rho}(q):=F(q) + \tilde \rho \int q\,d\mu
\end{equation} for some positive Lagrange multiplier $\tilde \rho$.
Pointwise minimization in $y$ of $L_{ \tilde \rho}(q)$ gives 
\begin{equation}
q_s(y)=
\Tfrac{1-r}{r} ( \big|D(y)\big|\big/s\,-1)_{\SSs +}\,\, p^{Y}(y)\nonumber
\end{equation}
for some constant $s=s( \tilde \rho)=(\,|\Ew X|^2 + \tilde \rho/r)^{1/2}$,
Pointwise in $y$, $\hat q_s$ is antitone and continuous in $s\ge 0$ and $\lim_{s\to 0[\infty]}q_s(y)=\infty[0]$, hence by
monotone convergence,
\begin{equation}
H(s)=\int \hat q_s(y) \,\mu(dy)\nonumber
\end{equation}
 too, is antitone and continuous and $\lim_{s\to 0[\infty]}H(s)=\infty[0]$. So by continuity, there is some $\rho \in (0,\infty)$ with $H(\rho)=1$.
On $M_0$, $\int q\,d\mu =1$, but $\hat q_\rho=q_{s=\rho}\in M_0$ and is optimal on $M\supset M_0$ hence it also minimizes $F$ on $M_0$.
In particular, we get representation \eqref{P0def} and note that, independently from the choice of $\mu$,
the least favorable $P_0^{Y^{\rm\SSs di}}$ is dominated according to $P_0^{Y^{\rm\SSs di}}\ll P^{Y^{\rm\SSs id}}$, i.e.;
non-dominated $P^{Y^{\rm\SSs di}}$ are even easier to deal with.

As next step we 
show that
\begin{equation} \label{minmax=maxmin}
\max\nolimits_{\partial{\cal U}}\min\nolimits_f{} [\ldots] = \min\nolimits_f{} \max\nolimits_{\partial{\cal U}}[\ldots]
\end{equation}
To this end we first verify \eqref{f0def} determining $f_0(y)$ as $f_0(y)=\Ew_{\SSs {\rm re};\hat P}[X|Y^{\rm\SSs re}=y]$.
Writing a sub/superscript
``${{\rm re;}\,P}$'' for evaluation under the situation generated by  $P=P^{Y^{\rm\SSs di}}$
and $\hat P$ for $P_0^{Y^{\rm\SSs di}}$, we obtain the the risk for general $P$ as
\begin{eqnarray}
{\rm MSE}_{\SSs{{\rm re;}\,P}}[f_0(Y^{\SSs {\rm re},\,P})]&=&
(1-r)\Ew_{\rm\SSs id}\big|X-f_0(Y^{\rm\SSs id})\big|^2+ r \tr\Cov X+\nonumber\\
&&\quad  +
r\,\Ew_P \min(|D(Y^{\SSs {\rm di;},q})|^2,\rho^2) \label{9.7}
\end{eqnarray}
This is maximal for any $P$ that is concentrated on the set $\big\{\,|D(Y^{\SSs {\rm di;},q})|>\rho\,\big\}$,
which is true for $\hat P$. Hence \eqref{minmax=maxmin} follows, as for any contaminating $P$
\begin{equation}
{\rm MSE}_{\SSs{{\rm re;}\,P}}[f_0(Y^{\SSs{{\rm re;}\,P}}] \le {\rm MSE}_{\SSs{{\rm re;}\,\hat P}}[f_0(Y^{\SSs{{\rm re;}\,\hat P}})]\nonumber
\end{equation}

Finally, we pass over from $\partial{\cal U}$ to ${\cal U}$: Let $f_r$, $\hat P_r$
 denote the components of the saddle-point for $\partial {\cal U}(r)$, as well as $\rho(r)$ the corresponding Lagrange multiplier
and $w_r$ the corresponding weight, i.e., $w_r=w_r(y)=\min(1, {\rho(r)}\,/\,{|D(y)|})$.
 Let $R(f,P,r)$ be the MSE of procedure $f$ at the {\rm SO} model
$\partial {\cal U}(r)$ with contaminating $P^{Y^{\rm \SSs di}}=P$.
As can be seen from \eqref{P0def}, $\rho(r)$ is antitone in $r$; in particular, as $\hat P_r$ is concentrated
on $\{|D(Y)|\ge \rho(r)\}$ which for $r\leq s$ is a subset of $ \{|D(Y)|\ge \rho(s)\}$, we obtain
$$
R(f_s,\hat P_s,s )=R(f_s,\hat P_r,s )\qquad\mbox{for}\;r\leq s
$$
Note that $R(f_s,P,0 )=R(f_s,Q,0 )$ for all $P,Q$---hence passage to $\tilde R(f_s,P,r )= R(f_s,P,r )-R(f_s,P,0 )$
is helpful---and that
\begin{equation} \label{varsplit}
\tr \Cov X= \Ew_{\rm\SSs id} \Big[\tr \Cov_{\rm\SSs id}[X|Y^{\rm\SSs id}]+ |D(Y^{\rm\SSs id})|^2 \Big]
\end{equation}
Abbreviate $\bar w_s(Y^{\rm\SSs id})=1-\big(1-w_s(Y^{\rm\SSs id})\big)^2\ge 0$ to see that
\begin{eqnarray}
\hspace{-2em}&&\tilde R(f_s,P,r )= 
r \Big\{\Ew_{\rm\SSs id}\Big[|D(Y^{\rm\SSs id})|^2 \bar w_s(Y^{\rm\SSs id})\Big] +
\Ew_{P}\min(|D(Y^{\rm\SSs id})|,\rho(s))^2\,\Big\} \leq \nonumber\\
\hspace{-2em}&&\leq r \Big\{\Ew_{\rm\SSs id}\Big[|D(Y^{\rm\SSs id})|^2 \bar w_s(Y^{\rm\SSs id})\Big] + \rho(s)^2\,\Big\}=
\tilde R(f_s,\hat P_r,r )<
\tilde R(f_s,\hat P_s,s )\nonumber
\end{eqnarray}
Hence the saddle-point extends to ${\cal U}(r)$; in particular the maximal risk is never attained in
the interior ${\cal U}(r)\setminus \partial {\cal U}(r)$. \eqref{sadvalSO} follows by plugging in the results.
%
%

  (2) Let $\tilde f(Y)=f(Y)-\Ew X $, and $X^0=X-\Ew X$; then  \eqref{Lem5SO} becomes
  \begin{align}
\quad& \Ew_{\SSs\rm id} |X^0 - \tilde f(Y)|^2 = \min\nolimits_{\tilde f}{}! \quad
   \mbox{s.t.}\;\sup\nolimits_{\cal U}\big|\Ew_{\SSs\rm re} \tilde f(Y^{\rm\SSs re})\big|\leq b \label{Lem5SO2}
\end{align}
  The assertion follows upon noting that $\sup_{\cal U}|\Ew_{\rm\SSs re} \tilde f|=\sup |\tilde f|$ (to be shown just as in \citet[chap.~5]{Ri:94})
  and writing
  $$\Ew_{\SSs\rm id} |X^0 - \tilde f(Y)|^2=\Ew_{\SSs\rm id}\Big[\Ew[ |X^0 - \tilde f(Y)|^2\,\Big|\,Y]\Big]$$
  ---minimize the inner expectation subject to $\big|\tilde f(Y^{\rm\SSs re})\big|\leq b$ pointwise in $Y$.
%
%

(3) If $\Ew_{\SSs\rm id}[X|Y]$ is linear in $Y$, the corresponding optimal matrix $M^0$ is just the respective
Fourier coefficient, i.e.; $\Cov(X,Y)\Var Y^{-}$.
We have already recalled that the classical Kalman filter is optimal among all linear filters;
hence the corresponding Kalman gain $M^0$ is then the optimal linear transformation in the SSM context.
\hfill\qed

\begin{Rem}\rm\small  \begin{ABC}
  \item \citet{B:S:93} proceed similarly for their result.  However, they invoke a minimax result
  by \citet{Fer:67} which in our infinite dimensional setting is not applicable. Also their setting is
  restricted to one dimension, and they assume Lebesgue densities right away---also in the contaminated
  situation.  In particular, they do not realize the connection to the exact conditional mean present in equation~\eqref{Ddef}.
  \item For an alternative proof, see \citet[pp.156--163]{Ru:01}: It uses \citet[App.~B]{Ri:94},
  showing existence of Lagrange multipliers in (1) by abstract compactness and continuity arguments.
 \item The fact that the solutions to Problems~\eqref{minmaxSO} and \eqref{Lem5SO} coincide parallels the
 situation in the estimation problem for a one-dimensional location parameter.
\end{ABC}
\end{Rem}
\paragraph{Proof to Proposition~\ref{nonorm}}
  Recall that by the Cram\'er-L\'evy Theorem (cf.\  \citet[Thm.~1, p.~525]{Fe2:71}) the sum of two independent
  random variables has Gaussian distribution iff each summand is Gaussian. This can easily be translated into a
  corresponding   asymptotic statement, cf.\  \citet[Prop.~A.2.4]{Ru:01}, i.e.;
  the sum of two independent
  random variables converges weakly to a Gaussian distribution iff each summand converges weakly to a Gaussian
  distribution.
  We first consider (for fixed $t$, omitted from notation where clear) the filter error,
  \begin{equation}
  \widetilde {\Delta X}:=X_t-X_{t|t}=\Delta X- H_b(M^0\Delta Y)\nonumber
  \end{equation}
  where we assume $\Delta X$, $\ve$, and $v$ normal.
  Then for the conditional law of $\widetilde{\Delta X}$ given $\Delta Y$ is 
$
  {\cal N}_p(g, (\EM_p-M^0Z)\Sigma) 
$ 
  for $\Sigma =\Cov\Delta X$ and
$
  g:= M^0\Delta Y - H_b(M^0\Delta Y) = \big(\big|M^0\Delta Y\big|-b\big)_{\SSs +} 
$. 
Hence
  \begin{equation}
  \Lw(\widetilde {\Delta X}) = \Lw(g) \ast {\cal N}_p(0, (\EM_p-M^0Z)\Sigma)\nonumber
  \end{equation}
  which by Cram\'er-L\'evy cannot be normal, as $g$ is obviously not normal.
  Consequently
$
  \Delta X_{t+1} = F_{t+1} \widetilde{ \Delta X_{t}}  + v_{t+1} 
$ 
  cannot be normal either. Hence starting with normal $\Delta X_t$ and $\ve_t$, $\Delta X_{t+1}$ cannot be normal.
  The same assertion clearly holds if $v_t$ is not normal.
  As by \eqref{btbound}, $g_t$ does neither converge to $0$ nor to $M^0\Delta Y$,
  the asymptotic version of Cram\'er-L\'evy also excludes asymptotic normality.
\hfill\qed \smallskip\\

\begin{Rem}\rm\small
 A similar assertion for the case that $v_t$ is normal but not both $\Delta X_t$ and $\ve_t$ are,
  seems plausible and we conjecture that this is true; it may also be proven in particular cases,
  but in general, it is hard to obtain due to the lack of independence of $\Delta X - g$ and $\Delta Y$.
\end{Rem}

\paragraph{Proof to Proposition~\ref{nolin}}
For the second equivalence in Proposition~\ref{nolin} we use the following lemma and a corollary of it:
\begin{Lem}\label{lem1}
  Let $\ve \sim {\cal N}_q(0,V)$, $X\sim P^X$ and for some measurable function $h\colon \mathop{\rm range}(X)\to \R^q$ let $Y=h(x)+\ve$.
  Let $g \in L^l_1(P^X)$, i.e., $g\colon \mathop{\rm range}(X)\to \R^l$ measurable and $\Ew_{P^X} |g(X)| <\infty$. Then
  \begin{equation} \label{lemass}
  \frac{\partial}{\partial y} \Ew [g(X) | Y=y] = \Cov[g(x),  h(x) | Y=y] V^{-1}
  \end{equation}
\end{Lem}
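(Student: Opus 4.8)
The plan is to compute the conditional expectation $\Ew[g(X)\mid Y=y]$ explicitly using Bayes' formula, write it as a ratio of integrals against the Gaussian density of $\ve$, and then differentiate under the integral sign. Because $\ve\sim{\cal N}_q(0,V)$, the observation density factor is $\varphi_V(y-h(x))$, where $\varphi_V$ denotes the ${\cal N}_q(0,V)$ Lebesgue density. The key analytic input is the score identity for the multivariate normal,
\begin{equation} \label{lem1score}
\frac{\partial}{\partial y}\varphi_V(y-h(x)) = -\varphi_V(y-h(x))\,(y-h(x))^\tau V^{-1},
\end{equation}
which converts a $y$-derivative of the density into multiplication by the (transposed) score $-(y-h(x))^\tau V^{-1}$.

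First I would write, via \eqref{decM}-type Bayes inversion,
\begin{equation}
\Ew[g(X)\mid Y=y]=\frac{\int g(x)\,\varphi_V(y-h(x))\,P^X(dx)}{\int \varphi_V(y-h(x))\,P^X(dx)}=:\frac{N(y)}{d(y)},\nonumber
\end{equation}
where $d(y)=p^{Y}(y)$ is the marginal density of $Y$. Then I would differentiate the ratio $N/d$ in $y$, justifying the interchange of $\partial/\partial y$ and the integral by dominated convergence (the Gaussian density and its gradient are smooth and, together with the integrability $\Ew_{P^X}|g(X)|<\infty$, admit integrable dominating functions locally in $y$). Applying \eqref{lem1score} to both $N$ and $d$ produces, after dividing through by $d(y)$, expressions of the form $-\Ew[g(X)(y-h(X))^\tau\mid Y=y]V^{-1}$ and $-\Ew[g(X)\mid Y=y]\,\Ew[(y-h(X))^\tau\mid Y=y]V^{-1}$. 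Subtracting (the quotient rule) assembles exactly the conditional covariance
\begin{equation}
\Cov[g(X),\,h(X)\mid Y=y]=\Ew[g(X)h(X)^\tau\mid Y=y]-\Ew[g(X)\mid Y=y]\,\Ew[h(X)^\tau\mid Y=y],\nonumber
\end{equation}
since the deterministic term $y^\tau$ in $(y-h(x))^\tau$ cancels between the two pieces; the surviving $-h(X)^\tau$ contributions combine into the covariance, and the two minus signs and the $V^{-1}$ on the right reproduce \eqref{lemass}.

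The main obstacle I anticipate is purely technical rather than conceptual: rigorously justifying differentiation under the integral sign, i.e.\ producing an integrable local dominating function for $\partial_y\big[g(x)\varphi_V(y-h(x))\big]$ uniformly for $y$ in a neighborhood. The gradient factor $(y-h(x))^\tau V^{-1}\varphi_V(y-h(x))$ is bounded as a function of $h(x)$ (the polynomial-times-Gaussian is uniformly bounded), so one controls $|g(x)|$ times a bounded factor, and the assumed $L_1(P^X)$ integrability of $g$ closes the argument; one should also note $d(y)>0$ on the relevant set so the quotient is well defined. Everything else is the quotient rule together with the Gaussian score identity \eqref{lem1score}, and I expect no further surprises.
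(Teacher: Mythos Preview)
Your proposal is correct and follows essentially the same route as the paper: write $\Ew[g(X)\mid Y=y]$ as a Bayes ratio against the Gaussian density $p^\ve(y-h(x))$, differentiate under the integral using the Gaussian score $\partial_y \log p^\ve(y-h(x))=-V^{-1}(y-h(x))$, and recognize the resulting quotient-rule expression as the conditional covariance times $V^{-1}$. The paper phrases the intermediate step via the score function $\Lambda^\ve$ (obtaining first $\Cov[g(X),\Lambda^\ve(Y-h(X))\mid Y=y]$ and then specializing $\Lambda^\ve(\ve)=-V^{-1}\ve$), whereas you plug in the Gaussian form from the start and are a bit more explicit about the dominated-convergence justification, but the argument is the same.
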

\begin{proof}
For simplicity, we only consider $\rk V=q$; otherwise we may pass to $\ve = A\tilde \ve$ for some $\tilde \ve\sim {\cal N}_{\tilde q}(0,\tilde V)$
with $\rk \tilde V=\tilde q$ and use the generalized inverse $V^-$ instead of $V^{-1}$ everywhere in the proof.

Let $p^\ve$ be the Lebesgue density of $\ve$ and denote $\Lambda^{\ve}(\ve):=\frac{\partial }{\partial  \ve} \log p^\ve(\ve)$.
Then, no matter whether $\ve$ is Gaussian, it holds that
$$
\Ew[g(X)|Y=y]= \frac{\int g(x) p^\ve(y-h(x))\,P^X(dx)}{\int p^\ve(y-h(x))\,P^X(dx)}
$$
As $\ve$ is normal, we may interchange differentiation and integration and obtain that
$$
\frac{\partial}{\partial y} \Ew[g(X)|Y=y]= \Cov[g(X),\Lambda^\ve(Y-h(X))\,|Y=y]
$$
But as $\ve\sim {\cal N}_q(0,V)$, it holds that $\Lambda^\ve(\ve)=-V^{-1} \ve$,
which entails \eqref{lemass}  as
$$\Lambda^\ve(y-h(X))-\Ew[\Lambda^\ve(Y-h(X))| Y=y]=V^{-1}(h(X)-\Ew[h(X)|Y=y])$$
\end{proof}
\begin{Cor}\label{cor1}
In our linear time discrete, Euclidean SSM, ommiting indices $t$, assume that $\rk V=q$ and
let
\begin{equation} \label{Unota}
U:=V^{-1}Z \Delta X,\quad U^0:=U-\Ew[U|\Delta Y], \quad \Delta X^0:=\Delta X-\Ew[\Delta  X|\Delta Y]
\end{equation}
Then
\begin{eqnarray}
  \frac{\partial}{\partial y} \Ew[\Delta X|\Delta Y=y] &= &\Cov(\Delta X, U|\Delta Y=y) \label{cor.asa}\\
  \frac{\partial^2}{\partial y_j\partial y_k} \Ew[\Delta X_i|\Delta Y=y] &=& \Ew(\Delta X_i^0 U_j^0 U_k^0|\Delta Y=y) \label{cor.asb}
\end{eqnarray}
\end{Cor}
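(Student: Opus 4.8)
The plan is to prove Corollary~\ref{cor1} by applying Lemma~\ref{lem1} twice, with carefully chosen functions $g$ and the identification $h(x)=Zx$ forced by the SSM observation equation. First I would note that in the linear Euclidean SSM with observation innovation $\Delta Y = Z\Delta X + \ve$ and $\ve\sim{\cal N}_q(0,V)$ independent of $\Delta X$ (condition~\eqref{normeps}), we are exactly in the setting of Lemma~\ref{lem1} with $X\leftarrow\Delta X$, $Y\leftarrow\Delta Y$, and $h(x)=Zx$. For the first derivative~\eqref{cor.asa}, I would apply the Lemma with $g=\Id$ (componentwise, so $g(\Delta X)=\Delta X$), which gives
\begin{equation}
\frac{\partial}{\partial y}\Ew[\Delta X|\Delta Y=y]=\Cov[\Delta X, Z\Delta X\,|\,\Delta Y=y]\,V^{-1}=\Cov[\Delta X, U\,|\,\Delta Y=y],\nonumber
\end{equation}
where the last equality is just the definition $U=V^{-1}Z\Delta X$ together with pulling the constant matrix $V^{-1}$ into the second slot of the conditional covariance. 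This step is essentially immediate once the identification is made.

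The second derivative~\eqref{cor.asb} is the real content, and the approach is to differentiate \eqref{cor.asa} once more in $y$ and reapply Lemma~\ref{lem1} to the appropriate $g$. Writing the $i$-th component, \eqref{cor.asa} reads $\partial_{y_j}\Ew[\Delta X_i|\Delta Y=y]=\Ew[\Delta X_i^0 U_j^0|\Delta Y=y]$ after expanding the conditional covariance as a conditional expectation of the centered product (centering is legitimate since subtracting conditional means does not change the covariance). I would then differentiate this conditional expectation in $y_k$. The natural route is to treat the whole expression $g(\Delta X):=\Delta X_i \cdot (V^{-1}Z\Delta X)_j = \Delta X_i U_j$ as a new integrand and apply Lemma~\ref{lem1} to it, yielding $\partial_{y_k}\Ew[\Delta X_i U_j|\Delta Y=y]=\Cov[\Delta X_i U_j, U_k|\Delta Y=y]$; combining this with the product-rule contributions coming from the $y$-dependence of the centering terms $\Ew[\Delta X_i|\Delta Y]$ and $\Ew[U_j|\Delta Y]$ should collapse, after cancellation, to the third-order conditional moment $\Ew[\Delta X_i^0 U_j^0 U_k^0|\Delta Y=y]$.

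The main obstacle I anticipate is the bookkeeping in this second differentiation: one must differentiate a conditional covariance $\Cov[\Delta X_i, U_j|\Delta Y=y]=\Ew[\Delta X_i U_j|\Delta Y]-\Ew[\Delta X_i|\Delta Y]\,\Ew[U_j|\Delta Y]$, and each of the three conditional-expectation factors depends on $y$, so the product rule generates several terms. The key algebraic fact that makes everything cancel is that, by Lemma~\ref{lem1}, differentiating \emph{any} conditional expectation $\Ew[\,\cdot\,|\Delta Y=y]$ in $y_k$ amounts to taking the conditional covariance of its argument with $U_k$ (since $\partial_{y_k}\Ew[\phi(\Delta X)|\Delta Y]=\Cov[\phi(\Delta X),U_k|\Delta Y]$ for $\phi\in\{g, \Id, U_j\text{-coordinate}\}$). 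Writing out the three resulting covariance terms and using that $\Ew[U_j^0|\Delta Y]=\Ew[\Delta X_i^0|\Delta Y]=0$ by construction, the second-order and cross terms cancel pairwise, leaving precisely the centered triple product $\Ew[\Delta X_i^0 U_j^0 U_k^0|\Delta Y=y]$. I would make sure to justify the interchange of differentiation and integration uniformly in $y$ exactly as in the proof of Lemma~\ref{lem1}, invoking the Gaussian density of $\ve$ and the assumed integrability, so that the iterated differentiation is valid.
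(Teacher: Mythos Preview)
Your proposal is correct and follows essentially the same route as the paper: apply Lemma~\ref{lem1} with $g=\Id$ and $h(x)=Zx$ to obtain~\eqref{cor.asa}, then write the first derivative as $\Ew[\Delta X_i U_j|\Delta Y]-\Ew[\Delta X_i|\Delta Y]\,\Ew[U_j|\Delta Y]$ and reapply Lemma~\ref{lem1} to the pieces (with $g(\Delta X)=\Delta X_i U_j$ and $g(\Delta X)=U_j$), after which the product-rule terms collapse to the centered triple moment $\Ew[\Delta X_i^0 U_j^0 U_k^0|\Delta Y]$. The paper's write-up is terser---it passes directly from $\Ew[\Delta X_i^0 U_j U_k^0|\Delta Y]$ to $\Ew[\Delta X_i^0 U_j^0 U_k^0|\Delta Y]$, leaving implicit the product-rule term $-\Ew[\Delta X_i^0 U_k^0|\Delta Y]\,\Ew[U_j|\Delta Y]$ that you correctly flag as part of the bookkeeping---but the substance is identical.
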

\begin{proof}
During the proof we will omit $\Delta$ in notation. Equation~\eqref{cor.asa} is just plugging in Lemma~\ref{lem1}.
We note that equivalently to \eqref{lemass} we could have written
$$
\frac{\partial}{\partial y} \Ew[X|Y=y] =\Ew[X (U^0)^\tau |Y=y]=\Ew[X U^\tau |Y=y]-\Ew[X|Y=y]\Ew[U|Y=y]^\tau
$$
Hence applying Lemma~\ref{lem1} for $g(X)=X_iU_j$ and $g(X)=U_j$ to the last two terms we obtain
\begin{eqnarray*}
 \frac{\partial^2}{\partial y_j\partial y_k} \Ew[X_i|Y=y]&=&\Ew[X_iU_jU_k^0|Y=y]-\Ew [X_i|Y=y] \Ew[U_jU_k^0|Y=y] =\\
  &=&\Ew[X_i^0U_jU_k^0|Y=y]
=\Ew[X_i^0U_j^0U_k^0|Y=y]
\end{eqnarray*}
\end{proof}

\textit{Proof to Proposition~\ref{nolin}}
Equivalence~\eqref{linnorm}:\smallskip\\
If ${\cal L}(\Delta  X)$ is normal, the uncorrelated random variables $\Pi \Delta X$ and $\bar \Pi\Delta X$ are independent and again normal, while
the random variables $\Delta X, \Delta Y$ are jointly
normal, hence linearity of conditional expectation is a well-known fact.

If $\Ew_{\rm\SSs id}[\Delta  X|\Delta Y]$ is linear, after subtracting $\Ew MZ\Delta X$ from both sides,
the defining equation for the conditional expectation $P^Y(dy)$-a.e.\ reads
\begin{equation}\label{1.34}
M \int (y-Zx) p^\ve(y-Zx)\,P^X(dx)=(\EM_p-MZ) \int x p^\ve(y-Zx)\,P^X(dx)
\end{equation}
Let us introduce $q^\ve(y)=yp^\ve(y)$ and the signed measure $Q^X(dx)=x\,P(dx)$; if we
denote the mapping $h\colon \R^q \to \R, y\mapsto h(y)=\int f(y-Zx)\,G(dx)$ by $f \mathop{\ast_{\SSs Z}} G$,
\eqref{1.34} becomes
\begin{equation}\label{1.35}
M q^\ve  \mathop{\ast_{\SSs Z}} P^X =(\EM_p-MZ) p^\ve  \mathop{\ast_{\SSs Z}} Q^X
\end{equation}
We pass over to the Fourier
transforms (denoted with $\,\hat\cdot\,$) for $s\in\R^p$, $t\in\R^q$
\begin{equation}
\begin{array}{ll}
\hat q^X(s) = \int e^{is^\tau x}\, Q^X(dx),&\quad \hat p^X(s)=\int e^{is^\tau x} \,P^X(dx),\\
\hat q^\ve(t) = \int e^{it^\tau x} q^\ve(y) \,dy,&\quad \hat p^\ve(t)=\int e^{it^\tau x} p^\ve(y) \,dy,
\end{array}\nonumber
\end{equation}
As usual, convolution translates into products in Fourier space, in our case
\begin{equation}
\widehat{f \mathop{\ast_{\SSs Z}} G}(t)=\hat f(t) \hat G(Z^\tau t),\qquad t\in\R^q\nonumber
\end{equation}
and hence \eqref{1.35} in Fourier space is
$
M \hat q^\ve  \hat p^X(Z^\tau  \,\cdot\,) =(\EM_p-MZ) \hat p^\ve  \hat q^X(Z^\tau \,\cdot\,)
$. 
For the derivatives $(\hat p^X)'(s)$, $(\hat p^\ve)'(t)$ for $s\in\R^p$ and $t\in\R^q$, we obtain
\begin{eqnarray}
(\hat p^X)'(s)=i\,\hat q^X(s), \qquad
(\hat p^\ve)'(t)=i\,\hat q^\ve(t)
\label{diffF2}
\end{eqnarray}

By assumption, $\EM_p-MZ$ is invertible and $\ve\sim{\cal N}_q(0,V)$, hence
 $\hat p^\ve(t)=\exp(-t^\tau V t/2)>0$ and together with \eqref{diffF2}, this gives the
linear differential equation
\begin{equation} \label{diffeq}
(\hat p^X)'(Z^\tau t)= -(\EM_p-MZ)^{-1} M V t    \hat p^X(Z^\tau  t)
\end{equation}
Fixing any direction $t_0$ such that $Z^\tau t_0\not=0$, this becomes an ODE
\begin{equation}
g'(s)= -t_0^\tau Z(\EM_p-MZ)^{-1} M V t_0 s   g(s),\qquad g(0)=1 \nonumber
\end{equation}
which has a unique solution given by
\begin{equation}
g(s)= \exp(-t_0^\tau Z(\EM_p-MZ)^{-1} M V t_0 s^2/2) \nonumber
\end{equation}
This is the characteristic function of a normal distribution, so $Z\Delta X$,
hence also $\bar\Pi\Delta X$ are normal, and  together with \eqref{PiNorm}
the assertion follows. On the other hand, $\Cov Z\Delta X=Z\Sigma Z^\tau$,
so we have also shown that $Z(\EM_p-MZ)^{-1} M V=Z\Sigma Z^\tau$, which otherwise is tricky unless assuming
 $\Sigma$ and $\Delta$ invertible.
\smallskip\\
Equivalence~\eqref{3mom}:\smallskip\\
If $\Ew_{\rm\SSs id}[\Delta  X|\Delta Y]$ is linear, by equivalence~\eqref{linnorm} $\Delta X$ and $\Delta Y$ are jointly normal
with expectation $0$, so the conditional law of $\Delta X$ given $\Delta Y$ is again normal with expectation 0, hence
in particular symmetric so the assertion follows.\\
Now assume
\begin{equation}\label{3rdmom0}
\Ew\Big[\Big(e^\tau (\Delta X - \Ew[\Delta X|\Delta Y])\Big)^3\,\Big|\,\Delta Y\Big]=0\qquad  \forall \, e\in\R^p
\end{equation}
Apparently,
$\Ew_{\rm\SSs id}[\Delta  X|\Delta Y]$ is linear iff
$\partial^2 / \partial y \partial y^\tau \Ew_{\rm\SSs id}[\Delta  X|\Delta Y]=0.$
But Corollary~\ref{cor1} gives (in the notation of \eqref{Unota})
\begin{equation} \label{XiUjUk}
\frac{\partial^2}{\partial y_j\partial y_k} \Ew[\Delta X_i|\Delta Y=y] = \Ew(\Delta X_i^0 U_j^0 U_k^0|\Delta Y=y)
\end{equation}
By complete polarization (compare \citet[Chap.~I.1]{Weyl:97}), \eqref{3rdmom0} also entails that the symmetric
multilinear form given by $\Ew [\Delta X_i^0\Delta X_j^0\Delta X_k^0|Y=y]_{i,j,k\in \{1,\ldots,p\}}$ is identically
$0$. So the assertion follows, as with $\tilde Z=ZV^{-1}$, the RHS of \eqref{XiUjUk} is just
$$\Ds\sum\nolimits_{h,l=1}^p \tilde Z_{j,h}\tilde Z_{k,l}\Ew(\Delta X_i^0 \Delta X_h^0 \Delta X_l^0|\Delta Y=y)\vspace{-5ex}$$
\hfill\qed

\paragraph{Proof to Theorem~\ref{ThmeSO}}
We proceed as in Theorem~\ref{ThmSO}, but note that in the {\rm eSO} context \eqref{SOdistr} becomes
\begin{eqnarray}
P(X \in A, Y^{\rm \SSs re} \in B) &=& (1-r)  \int \Jc_A(x)\Jc_B(y) \pi(y,x)\,P^{X^{\rm\SSs id}}(dx)\,\mu(dy)\nonumber\\
 &&
\label{eSOdistr}\quad
 +r \int \Jc_A(x)\Jc_B(y) q(y)\, P^{X^{\rm\SSs di}}(dx)\,\mu(dy)\nonumber
\end{eqnarray}
and hence \eqref{SOEw} becomes
\begin{equation} \label{eSOEw}
\Ew_{\rm\SSs re}[X|Y^{\rm \SSs re}=y]=\frac{r q(y)\Ew_{\rm\SSs di}[X^{\rm\SSs di}]+(1-r)p^{Y^{\rm\SSs id}}(y)\Ew_{\SSs \rm id}[X|Y]}{%
r q(y)+(1-r)p^{Y^{\rm\SSs id}}(y)} \nonumber
\end{equation}
But by \eqref{esogl}, the RHS of \eqref{eSOEw} is exactly $F(q)$ from \eqref{SOEw}. Thus, we may jump to the proof of
Theorem~\ref{ThmSO} from this point on, replacing $\tr \Cov X$ by
\begin{equation}
\tilde G := \tr \Cov_{P_0^{X^{\rm \SSs di}}} X^{\rm \SSs di}= G - |\Ew_{\rm \SSs id} X^{\rm \SSs id}|^2 \nonumber
\end{equation}
in equation~\eqref{9.7}. For passing from $\partial {\cal U}^{\rm\SSs eSO}$ to ${\cal U}^{\rm\SSs eSO}$,
 let $f_r$, $\hat P_r\otimes \hat Q_r$ be the
 components of the saddle-point at
 $\partial {\cal U}^{\rm\SSs eSO}(r)$ and $R(f,P\otimes Q,r)$ be the MSE of procedure $f$ at
 $\partial {\cal U}^{\rm\SSs eSO}(r)$ with contaminating $P^{Y^{\rm \SSs di}}\otimes P^{X^{\rm \SSs di}}=P\otimes Q$.
Instead of equation~\eqref{varsplit}, we use
\begin{equation}
\Delta G := \tilde G-\tr \Cov_{\rm\SSs id} X^{\rm\SSs id}=G-\Ew_{\rm \SSs id} |X^{\rm \SSs id}|^2 \ge 0 \nonumber
\end{equation}
and abbreviating $R(f,P\otimes Q,r)-R(f,P\otimes Q,0)$
by $\tilde R(f,P\otimes Q,r)$ we obtain
\begin{eqnarray}
&&\hspace{-3em}\tilde R(f_s,P \otimes Q,r )=
r \,\Big\{\tr \Cov_Q X^{\rm\SSs di} - \Cov_{\rm\SSs id} X^{\rm\SSs id} + \Ew_{P}[\min(|D(Y^{\rm\SSs di})|,\rho(s))^2]
\,\Big\}\leq \nonumber\\
&\leq& r \Big\{\Delta G+\Ew_{\rm\SSs id}\Big[|D(Y^{\rm\SSs id})|^2 \bar w_s(Y^{\rm\SSs id})\Big] + \rho(s)^2\,\Big\} \nonumber = \\
&= &\!\!\tilde R(f_s,\hat P_r\otimes \hat Q_r,r )<\tilde R(f_s,\hat P_r\otimes \hat Q_r,s )=
\tilde R(f_s,\hat P_s\otimes\hat Q_s,s ) \nonumber
\end{eqnarray}
Hence the saddle-point extends to ${\cal U}^{\rm\SSs eSO}(r)$.
\eqref{sadvaleSO} follows by plugging in the results.
\hfill\qed

\paragraph{Proof to Proposition~\ref{testprop}}
Under $H_0$, due to Proposition~\ref{nolin}, $\Delta X_i^\natural\iid{\cal N}_p(0,\Sigma)$.
Hence $e^\tau\Delta X_i^\natural\iid{\cal N}(0,\sigma^2)$. Thus by the Lindeberg-L\'evy CLT,
$$
\frac{1}{\sqrt{n}} \sum_{i=1}^n (e ^\tau \Delta X_i^\natural)^3\wto {\cal N}(0, \Ew [(e ^\tau \Delta X_t)^6])
$$
But the sixth moment of ${\cal N}(0,\sigma^2)$ is just $15 \sigma^6$. Hence by the assumed consistency of $\hat e_n$ for $e$,
Slutsky's Lemma yields \eqref{asynorm}. Asymptotically, the testing problem is a test for a normal mean $\mu$ to be $0$ or not,
which yields the corresponding optimality for the Gauss test given in \eqref{Gausstest}.
\hfill\qed

\paragraph{Proof to Proposition~\ref{esoprop}}
 Let us identify $X\leadsto \Delta  X^{\cal N}$,
 $Y\leadsto \Delta  Y^{\cal N}:=Z\Delta  X^{\cal N}+\ve$, and set $P^\ve={\cal N}_q(0,V)$,
 $P^X={\cal N}_p(0,\Sigma)$, and let $p^\ve$ the corresponding Lebesgue density, then
 $\pi(y,x)=p^\ve(y-Zx)$.
 Assertions (1') and (3') of Theorem~\ref{ThmeSO} show that the {\rm eSO}-optimal 
 $f_0$ in our ``Bayesian''
 model of subsection~\ref{1stpOpt} is just
 $
f_0(y)=M^0(y)\min\{1, \rho/\big|M^0 y\big|\}
$
with $\rho$ according to \eqref{P0def} such that $\int \,dP_0^{Y^{\rm\SSs di}}=1$
and $M^0=\Sigma Z^\tau(Z\Sigma Z^\tau+V)^{-1}$.\\ By assumption, $\Delta X^{\rm\SSs rLS}$ lies
in the corresponding {\rm eSO}-neighborhood ${\cal U}(r)$ about $\Delta X^{\cal N}$ so
the value of the saddle-point from equation~\eqref{sadvalSO} is also a bound for the MSE of $X^{\rm\SSs rLS}_{t|t}$
on ${\cal U}(r)$.
\hfill\qed
\begin{Rem}\rm\small
One should mention, however, that due to assumption~\eqref{indep} resp.\ \eqref{indep2}, members of an SO-neighborhood
${\cal U}'(r')$ about $\Lw(\Delta X^{\rm\SSs rLS}, \Delta Y^{\rm\SSs rLS})$ need not lie in an {\rm eSO} neighborhood ${\cal U}(r+r')$
about $\Lw(\Delta X^{\cal N}, \Delta Y^{\cal N})$.
\end{Rem}

\section*{Acknowledgements}
The author would like to acknowledge and thank for the stimulating discussion he had with
Gerald Kroisandt at ITWM which led to the definition of rLS.IO. %
He also wants to thank Helmut Rieder for several suggestions as to notation and
formulations which have much improved clarity and readability of this paper.
Many thanks go to Nataliya Horbenko for proof-reading this paper.
Of course, the opinions expressed in this paper as well as any errors are
solely the responsibility of the author.

\end{document}